\newcommand{\xa}{x_p}
\newcommand{\kjn}{U_{k+j:n}}
\newcommand{\dkjn}{\Delta_{k+j,n}}
\theoremstyle{plain}
\newtheorem{theorem}{Theorem}
\theoremstyle{definition}
\newtheorem{lemma}{Lemma}
\theoremstyle{remark}
\begin{document}
	\title{Spacings Around An Order Statistic}
	
	\author{H. N. Nagaraja}
	\address{Division of Biostatistics, The Ohio State University\\1841 Neil Avenue, Columbus, OH 43210, USA.}
	   \email{nagaraja.1@osu.edu}
	
\author{Karthik Bharath}
\address{School of Mathematical Sciences, University of Nottingham, \\ University Park, Nottingham NG72RD, UK}
\email{karthikbharath@gmail.com}

\author{Fangyuan Zhang}
\address{Department of Statistics, The Ohio State University, \\1958 Neil Ave,  Columbus, OH 43210, USA}
 \email{zhang.1243@buckeyemail.osu.edu}
	\maketitle
	
	\begin{abstract}
	We determine the joint limiting distribution of adjacent spacings around a central, intermediate, or an extreme order statistic $X_{k:n}$ of a random sample of size $n$ from a continuous distribution $F$. For central and intermediate cases, normalized spacings in the left and right neighborhoods are asymptotically i.i.d. exponential random variables.  The associated independent Poisson arrival processes are independent of $X_{k:n}$. For an extreme $X_{k:n}$, the asymptotic independence property of spacings fails for $F$ in the domain of attraction of Fr\'{e}chet and Weibull ($\alpha \neq 1$) distributions. This work also provides additional insight into the limiting distribution for the number of observations around $X_{k:n}$ for all three cases.\\
		\\
		\footnotesize{Keywords}: Spacings; uniform distribution; central order statistics; intermediate order statistics; extremes; Poisson process.
	\end{abstract}

\section{Introduction}
Let $X_{1:n} \leq X_{2:n}\leq\cdots \leq X_{n:n}$ be order statistics of a random sample $X_1,\ldots,X_n$ from a continuous cdf $F$. For $1 \leq k \leq n$, we examine the clustering of data around the order statistic $X_{k:n}$.  This is done by an investigation into the limiting properties of the right and left neighborhoods formed by the adjacent spacings    $(X_{k+1:n}-X_{k:n}, \ldots, X_{k+r:n}-X_{k+r-1:n})$ and $(X_{k:n}-X_{k-1:n},\ldots, X_{k-s+1:n}-X_{k-s:n})$ for fixed $r$ and $s$.  We let $n \to \infty$ and consider three scenarios: (i) Central case where $k/n \to p$, $0 < p < 1$; (ii) Intermediate case where $k, n-k \to \infty$ and $k/n \to 0$ or $1$; (iii) Extreme case where $k$  or $n-k$ is held fixed.  In the first two cases we show that, under some mild assumptions,  these $(r+s)$ spacings appropriately scaled with a common scale parameter converge weakly to  a set of i.i.d. standard exponential random variables (rvs). In the extreme case,  this conclusion holds only when $F$ is in the domain of attraction of the Gumbel cdf $G_3$, or the Weibull type cdf $G_{2; \alpha}$ with $\alpha = 1$.  A direct and useful consequence of such a result is that  order statistics around a selected one arrive as in a homogeneous Poisson  process.

Neighborhoods around a selected order statistic have been investigated by several authors in recent years.  Almost all these results, starting with $X_{n:n}$,  have concentrated on the distribution of counts around it. We refer to a few, relevant to our results, from an exhaustive list: Balakrishnan and Stepanov (2005); Dembi\'nska et al.\ (2007); Pakes and Steutel (1997); Pakes (2009); and Dembi\'nska and Balakrishnan (2010).
These authors typically consider neighborhoods of the form $(X_{k:n}-d, X_{k:n})$ or $(X_{k:n}, X_{k:n}+d)$ where the lengths of the intervals may or may not depend on $n$; in some papers the $d$'s are induced by the quantile function $F^{-1}$ or are chosen to be random.  While these approaches are beneficial from a technical perspective, it is more natural and practical to consider neighborhoods that are in the scale of the data collected.  This is our motivation for considering the joint distribution of adjacent spacings. Our approach allows us to characterize the process governing the distribution of counts and provides additional insight into the asymptotic properties of the counts of cluster sizes around a specified order statistic. 

Section 2 contains preliminaries that explore the properties of uniform and exponential order statistics; it introduces the von Mises conditions and the associated extreme value distributions.  Section 3 is concerned with the joint distribution of a central order statistic and spacings adjacent to it on its right and left neighborhoods.  The Poisson arrival process of adjacent order statistics is established there.  Assuming von Mises conditions, Section 4 reaches a similar conclusion for the neighborhood of an intermediate order statistic. Section 5 displays the distributional structure of the extreme spacings assuming $F$ is in the domain of attraction of an extreme value distribution.  Section 6 applies our results and describes the limiting distribution of the counts of observations around an order statistic.  Section 7 discusses further applications of our results and contains concluding remarks.

Let $f(x)$ denote the pdf and $F^{-1}(p)$, $0 \leq p \leq 1$, be the quantile function associated with $F(x)$, where $F^{-1}(p) = \inf\{x: F(x) \geq p\}$ for $0<p \leq 1$, $F^{-1}(0) = \sup\{x: F(x) =0\}$.  We interchangeably use $x_p$ and $F^{-1}(p)$ as the $p^{th}$ quantile. It is well-known that if $F$ is differentiable at $x_p$ with finite and positive pdf $f(x_p)$, $F^{-1}$ is differentiable at $p$ with derivative $1/f(x_p)$.  Standard uniform and exponential rvs are respectively denoted by $U$ and $Z$.   An exponential rv with rate parameter $\lambda$ will be denoted by Exp($\lambda$), and  Poi($\lambda$) represents a Poisson rv with mean parameter $\lambda$.  The sum of $r$ i.i.d. standard exponentials is a Gamma rv, to be denoted as Gam($r$). A Weibull rv with shape parameter $\delta$ will be denoted by Wei($\delta$).  Further, a standard normal rv will be denoted by $N(0,1)$ and its pdf by $\phi(\cdot)$. The $Z_i$'s and $Z^*_i$'s are i.i.d. Exp($1$) rvs. The symbol $\sim$ indicates asymptotic equivalence.

The rv $U_{i:n}, 1 \leq i \leq n$, is the $i^{th}$ order statistic from a random sample of size $n$ from a standard uniform population.  The distributional equivalence, $X_{i:n} \stackrel{d}{=}F^{-1}( U_{i:n})$, for {\it any} collection of order statistics from an arbitrary cdf $F$ is helpful in our investigations.

\section{Preliminaries}
\subsection{Spacings near a Uniform Order Statistic}
The key to our approach is the following well-known exchangeable property of the uniform order statistics.   Let $U_{0:n} = 0$ and $U_{n+1:n}= 1$, and define the {\it uniform spacing}
\begin{equation}\label{unifsp0}
\Delta_{i,n} = U_{i+1:n}-U_{i:n}, 0 \leq i \leq n. 
\end{equation} Then it is well known that the $\Delta_{i,n}$'s are exchangeable and for any fixed $r$, and for constants $v_i \geq 0, i = 1, \ldots r$ with $r \leq n$, and $\sum_{i=1}^r v_i \leq 1$, the joint survival function of $\Delta_{1,n}, \ldots, \Delta_{r,n}$ (and hence any collection of $r$ $\Delta_{i,n}$'s) is given by (see, e.g., David and Nagaraja, 2003, p.\ 135)
\begin{equation*}
P(\Delta_{1,n} > v_1, \ldots, \Delta_{r,n} > v_r) = (1-\sum_{i=1}^r v_i)^{n-1}.
\end{equation*}
This means
\begin{equation}\label{unifsp1}
P(n\Delta_{1,n} > v_1, \ldots, n\Delta_{r,n} > v_r) \to \prod_{i=1}^r \left\{e^{-v_i}\right\}, v_1, \ldots, v_r > 0.
\end{equation}
That is, $n\Delta_{i,n}$ form an i.i.d. Exp($1$) sequence $Z_i$ as $n \to \infty$. The convergence is fast;  Problem P.5.19 of Reiss (1989, p.\ 201) notes that  there exist a constant $C$ such that for every positive integer $n$ and $r \leq n$,
$$\sup_{\mathcal{B} \in \mathbb{B}}|P\{(n\Delta_{1,n}, \ldots, n\Delta_{r,n}) \in \mathcal{B}\} - P\{(Z_1, \ldots, Z_r) \in \mathcal{B}\}| \leq C \cdot (r/n),$$
where $\mathbb{B}$ denotes the family of all Borel sets.  We record the implications of (\ref{unifsp1}) and the exchangeability of the $\Delta_{i,n}$'s as a lemma given below; it uses the fact that the inter-arrival times being i.i.d.\ Exp($\lambda$) rvs is a defining property of a homogeneous Poisson process with rate $\lambda$.\begin{lemma}\label{lemma1} Let $U_{i:n}$ denote the $i^{th}$ order statistic from a random sample of size $n$ from a standard uniform distribution, and  assume $n \to \infty$.  Then for any $k$  such that $n-k \to \infty$, \begin{equation*}
(n(U_{k+1:n}-U_{k:n}), \ldots n(U_{k+r:n}-U_{k+r-1:n})) \stackrel{d}{\to} (Z_1, \ldots, Z_r),
\end{equation*}
for any fixed $r$, and for any $k \to \infty$
\begin{equation*}
(n(U_{k:n}-U_{k-1:n}), \ldots n(U_{k-s+1:n}-U_{k-s:n})) \stackrel{d}{\to} (Z^*_1, \ldots, Z^*_s),
\end{equation*}
for any fixed $s$, where the $Z_i$'s and $Z^*_i$'s are all mutually independent Exp($1$) rvs.  That is, inter-arrival times of successive order statistics in the right and left neighborhoods of $k^{th}$ uniform order statistic, upon scaling by $n$, produce asymptotically independent homogeneous Poisson processes if $n$, $k$, and $n-k$  approach infinity.  If $k$ [resp.\ $n-k$] is bounded, the right [resp.\ left] neighborhood produces a Poisson process in the limit.
\end{lemma}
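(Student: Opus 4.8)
The plan is to reduce both displays to two facts already in hand: the exchangeability of the uniform spacings $\Delta_{i,n}$, and the limit relation (\ref{unifsp1}). First I would rewrite the quantities in the lemma through the $\Delta_{i,n}$: for $j=1,\ldots,r$ one has $U_{k+j:n}-U_{k+j-1:n}=\Delta_{k+j-1,n}$, so the right‑neighborhood vector equals $(n\Delta_{k,n},\ldots,n\Delta_{k+r-1,n})$; for $j=1,\ldots,s$ one has $U_{k-j+1:n}-U_{k-j:n}=\Delta_{k-j,n}$, so the left‑neighborhood vector equals $(n\Delta_{k-1,n},\ldots,n\Delta_{k-s,n})$. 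When $n-k\to\infty$ the indices $k,\ldots,k+r-1$ all lie in $\{0,\ldots,n\}$ for $n$ large, so by exchangeability the law of $(\Delta_{k,n},\ldots,\Delta_{k+r-1,n})$ coincides with that of $(\Delta_{1,n},\ldots,\Delta_{r,n})$, and (\ref{unifsp1}) gives $(n\Delta_{k,n},\ldots,n\Delta_{k+r-1,n})\stackrel{d}{\to}(Z_1,\ldots,Z_r)$. The identical argument when $k\to\infty$ (so $k-s\ge 0$ eventually) produces the left‑neighborhood convergence to $(Z^*_1,\ldots,Z^*_s)$.

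For the joint independence of the $Z_i$'s and the $Z^*_i$'s — the one point that needs a little care — I would observe that when both $k\to\infty$ and $n-k\to\infty$, the left and right neighborhoods together involve precisely the contiguous block $\Delta_{k-s,n},\Delta_{k-s+1,n},\ldots,\Delta_{k+r-1,n}$, that is, $r+s$ of the $n+1$ uniform spacings, and for $n$ large all these indices are admissible. Exchangeability identifies the law of this block with that of $(\Delta_{1,n},\ldots,\Delta_{r+s,n})$, and (\ref{unifsp1}) applied with $r+s$ in place of $r$ gives weak convergence of $n$ times the block to a vector of $r+s$ i.i.d.\ Exp($1$) random variables; reading off the first $r$ coordinates as $(Z_1,\ldots,Z_r)$ and the last $s$ as $(Z^*_1,\ldots,Z^*_s)$ delivers the asserted joint limit with all $r+s$ components mutually independent. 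The Reiss bound quoted just above (\ref{unifsp1}) gives the same conclusion with an explicit $O((r+s)/n)$ total‑variation rate, if one prefers a quantitative statement.

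The Poisson‑process conclusion then follows from the characterization recalled immediately before the lemma: a point process whose successive inter‑arrival times are i.i.d.\ Exp($\lambda$) is a homogeneous Poisson process of rate $\lambda$. Before normalization the adjacent order statistics arrive with spacings of order $1/n$; multiplying by $n$ and letting $n\to\infty$ turns the right‑hand and the left‑hand inter‑arrival sequences into two mutually independent i.i.d.\ Exp($1$) sequences, hence into two independent rate‑$1$ homogeneous Poisson processes emanating from $U_{k:n}$, provided $n$, $k$, and $n-k$ all tend to infinity. If only $n-k\to\infty$ (so $k$ may be bounded) the right block of $r$ spacings is still available for large $n$ and the first display persists, yielding a Poisson process in the right neighborhood; the symmetric statement holds for the left neighborhood when only $k\to\infty$.

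I expect no genuine obstacle: everything is a bookkeeping consequence of the preliminary results. The only thing to get right is the identification of the combined left‑and‑right neighborhood with a single contiguous block of exchangeable spacings, so that the joint independence of the $Z_i$'s and the $Z^*_i$'s is automatic from one application of (\ref{unifsp1}) with $r+s$ coordinates rather than an extra claim; and verifying that the relevant index ranges are legitimate for all large $n$ under the stated growth hypotheses on $k$ and $n-k$.
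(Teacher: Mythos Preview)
Your proposal is correct and matches the paper's approach exactly: the paper presents Lemma~\ref{lemma1} without a separate proof, stating it as a direct consequence of (\ref{unifsp1}), the exchangeability of the $\Delta_{i,n}$'s, and the characterization of a homogeneous Poisson process via i.i.d.\ exponential inter-arrival times. Your only addition is the explicit observation that the combined left-and-right block of $r+s$ spacings is itself exchangeable with $(\Delta_{1,n},\ldots,\Delta_{r+s,n})$, which cleanly justifies the mutual independence of all the $Z_i$'s and $Z^*_i$'s; this is implicit in the paper's discussion but you have made it precise.
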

\subsection{Spacings near an Exponential Order Statistic}
When $F$ is standard exponential it is well-known that
\begin{equation}\label{exp_os}
X_{i:n} \stackrel{d}{=} \frac{Z_1}{n} + \cdots + \frac{Z_{i}}{n-i+1}, i = 1, \ldots, n,
\end{equation}
where the $Z_i$'s are i.i.d. Exp(1) rvs.  From this representation it follows that $(n-i+1)(X_{i:n} - X_{i-1:n})$ are i.i.d. Exp(1) rvs.  So, if $k/n \to p, 0 \leq p < 1$, $n(1-p)(X_{k+j:n} - X_{k+j-1:n}),$ $j= 1, \ldots, r$ will be asymptotically i.i.d Exp(1) rvs. 

When $(n-k)$ is bounded, $(n-k-j+1)(X_{k+j:n} - X_{k+j-1:n}),$ $j= 1, \ldots, r$ for $r \leq n-k$ turn out to be i.i.d.\ Exp(1) rvs.   Only in this scenario, we need finite and distinct scaling constants for the spacings to transform them into i.i.d.\ exponential rvs for any $n$, and hence asymptotically as well.  
\subsection{Extremes and Von Mises Conditions}
Suppose there exist sequences of constants $a_n$ and $b_n > 0$ such that $P\{(X_{n:n} - a_n)/b_n \leq x\}$ converges to  a nondegenerate cdf $G(x)$ corresponding to a rv $W$. Then we say $F$ is in the domain of maximal attraction to $G$ and we write $F \in \mathcal{D}(G)$.  Then it is known that $G$ is necessarily of one of the three types given below.  
\begin{eqnarray}\label{extremeG}
\mbox{(Fr\'{e}chet)}~~G_{1;\alpha}(x) &=&\begin{cases}0 &x \leq 0, ~~\alpha >0, \\
\exp(-x^{-\alpha})& x > 0;
\end{cases}\nonumber \\[.1in]
\mbox{(Weibull)}~~G_{2;\alpha}(x) &=&
\begin{cases}\exp\left[-(-x)^{\alpha}\right]
&x \leq 0, \alpha > 0,\\
1&x > 0; \end{cases} \\[.1in]
\mbox{(Gumbel)}~~~~~G_3(x)&=&\exp\left({-e^{-x}}\right), -\infty < x < \infty. \nonumber
\end{eqnarray}
The following are necessary and sufficient conditions on the right tail of $F$ in order that $F \in \mathcal{D}(G)$. The first two are due to Gnedenko (1943) and the last one is due to de Haan (1970).  
 \begin{itemize}
 \item[(a)] $F \in \mathcal{D}(G_{1;\alpha})$ iff for all $t > 0$,
\begin{equation*}\label{nsc1}
\lim_{x \to \infty} \frac{1-F(tx)} {1- F(x)}= t^{-\alpha}.
\end{equation*} 
\item[(b)] $F \in \mathcal{D}(G_{2;\alpha})$ iff $x_1 (= F^{-1}(1))$  is finite, and the following condition holds for every $t > 0$:
\begin{equation*}\label{nsc2}
\lim_{x \to 0+} \frac{1- F(x_1 -
tx)}{1-F(x_1 - x)} = t^{\alpha}.
\end{equation*}

\item[(c)] $F \in \mathcal{D}(G_3)$  iff the following hold: 
$E(X|X > c)$ is finite for some $c$, and for all real $t$,
\begin{equation*}\label{nsc3} \lim_{x \to x_1} \frac{1- F(x + t m(x))}{1-F(x)} = e ^{-t},
\end{equation*}
where $m(x) = E(X-x | X > x)$.

\end{itemize}
Our approach for the intermediate case assumes the following sufficient conditions that are applicable to absolutely continuous cdf's.  The first two are due to  von Mises (1936), and the last one is due to Falk (1989) and is weaker than the corresponding von Mises condition that assumes differentiability of the pdf $f$ (see, e. g., David and Nagaraja, 2003, p.\ 300).
 \begin{enumerate}
\item[(a)]~ $F \in \mathcal{D}(G_{1;\alpha})$ if $f(x) > 0$ for all large $x$ and for some
$\alpha
> 0$,
\begin{equation}\label{sG1} \lim_{x \to \infty}\frac{ x f(x)}{ 1-F(x)} = \alpha.
\end{equation}
 \item[(b)]~ $F \in \mathcal{D}(G_{2;\alpha})$ if $x_1 < \infty$ and for some $\alpha
> 0$,
\begin{equation}\label{sG2} \lim_{x \to x_1-}\frac{(x_1 - x) f(x)}{1-F(x)} =
\alpha.
\end{equation}
 \item[(c)]~ $F \in \mathcal{D}(G_3)$ if $f(x) > 0$ for all $x$ in $(c, x_1)$ and 
$E(X|X > c)$ is finite for some $c$, and \begin{equation}\label{sG3} \lim_{x \to x_1-} \frac{f(x) m(x)}{1-F(x)} = 1,
\end{equation}
where $m(x) = E(X-x | X > x)$. 
\end{enumerate}
The family of limiting distribution for  normalized $X_{1:n}$ correspond to that of $-W$ where $W$ has one of the above three types of cdfs; parallel necessary and sufficient, and sufficient conditions exist that impose conditions on the left tail of $F$.

\section{Spacings Around a Central Order Statistic}
\subsection{Joint Distribution of Spacings}

For $0<p<1$, $X_{k:n}$ is a central order statistic if $\frac{k}{n} \to p$. For such an $X_{k:n}$, Smirnov (1952; Theorem 3, p.\ 12) has shown (as pointed out by a reviewer) that
\begin{equation}\label{smirnov}X_{k:n}\overset{a.s.} \to x_p
\end{equation}
if the condition $F(x) = p$ has a unique solution $x_p$. Since for any fixed $j$, \begin{equation}\label{link}
n(X_{k+j+1:n} - X_{k+j:n})\stackrel{d}{=} \frac{F^{-1}(U_{k+j+1:n}) - F^{-1}(U_{k+j:n}) }{(U_{k+j+1:n}-U_{k+j:n})} n(U_{k+j+1:n}-U_{k+j:n}),
\end{equation}
the limiting joint distribution of the spacings from an arbitrary cdf $F$ can be linked to that of a collection of i.i.d.\ standard uniform rvs provided the first factor on the right in (\ref{link}) above converges in probability to a nonzero constant.  

From (\ref{smirnov}) it follows that $\Delta_{k+j, n}=U_{k+j+1:n}-U_{k+j:n}$ (defined in (\ref{unifsp0})) almost surely converges to 0. The first factor on the right in (\ref{link}), \begin{equation}\label{scale1}
\frac{F^{-1}(U_{k+j:n} +\Delta_{k+j, n}) - F^{-1}(U_{k+j:n}) }{\Delta_{k+j, n}} \stackrel{a.s.}{\to} \frac{1}{f(x_p)},
\end{equation}
if the following condition holds:\\
\begin{equation}\label{condf}
f~\mbox{is positive, finite and continuous at}~~ x_p.
\end{equation}
This conclusion follows from the definition of the derivative of $F^{-1}$ and its assumed continuity at $p$.

 Upon using (\ref{scale1}), (\ref{link}), Slutsky's Theorem, and  Lemma \ref{lemma1} we conclude that jointly
\[
nf(\xa)(X_{k+j+1:n}-X_{k+j:n}) \overset{d}\to Z_{j+1}, j = -s, \ldots 0, \ldots, r-1,
\]
where the $Z_j$'s are i.i.d.\ Exp($1$) rvs if (\ref{condf}) holds. 

We can weaken the continuity assumption for $f$ in (\ref{condf}) with the following condition:
\begin{equation}\label{demb}
\lim_{(p_1,h)\to (p, 0^+)} \frac{F^{-1}(p_1+h)-F^{-1}(p_1)}{F^{-1}(p+h)-F^{-1}(p)}=1,
\end{equation}
where $0<p<1$. This assumption is similar to (17) in Dembi\'nska et al. (2007) (given as (\ref{Demb2}) in Section 6 later).  The condition (\ref{condf}) implies that (\ref{demb}) holds since the latter is satisfied upon dividing the numerator and denominator by $h$ and taking the double limit; the converse is not true. 

On the other hand, we can weaken the requirement for a finite nonzero $f(\xa)$ by modifying a condition on $F$  used by  Chanda (1975) [see also M. Ghosh and Sukhatme (1981)].  We assume that
\begin{equation}\label{den}
\lim_{h \to 0} \frac{|F^{-1}(p+ h)-F^{-1}(p)|}{|h|^{\theta}}=M(p,\theta) \in (0,\infty),
\end{equation}
for some $\theta>0$. 
If $f$ is indeed finite and nonzero at $x_p$, then the above condition is satisfied with $\theta=1$. Whenever $f(x_p)$ is finite and positive or (\ref{den}) holds, there is a unique solution to $F(x) = p$ and (\ref{smirnov}) holds.

Based on the above discussion, we can now formally state the result for the central case. 
\begin{theorem}\label{theorem1}
Let  $k/n \to p \in (0,1)$, and  $r$ and $s$ be fixed positive integers. 
\begin{enumerate}
\item[(a)] If condition (\ref{condf}) holds, or  if  (\ref{demb}) holds and $f(x_p)$ is finite and positive, 
$$\left\{nf(\xa)(X_{k+j:n}-X_{k +j-1:n}), j = -(s-1), \ldots, 0, \ldots, r \right\}$$ $$ \overset{d}\to \{Z^*_s, \ldots, Z^*_1, Z_1, \ldots, Z_r\}$$
where the $Z$'s are i.i.d. Exp($1$) rvs. Thus the two counting processes defined by setting the $j^{th}$ event to occur respectively at times $$nf(\xa)\sum_{i=k}^{k+j-1} (X_{i+1:n}-X_{i:n}) ~~\mbox{and}~~nf(\xa)\sum_{i=k-1}^{k-j} (X_{i+1:n}-X_{i:n}) $$ converge weakly to two independent homogeneous Poisson processes with unit intensity. 
\item [(b)] Assume (\ref{demb}) and (\ref{den}) hold.   Then,
\begin{eqnarray}\label{th2}
\left\{n^{\theta}(X_{k+j:n}-X_{k +j-1:n})/M(p, \theta) , j = -(s-1), \ldots, 0, \ldots, r \right\}\nonumber\\
\overset{d}\to \left\{(Z^*_s)^{\theta}, \ldots, (Z^*_1)^{\theta}, (Z_1)^{\theta}, \ldots, (Z_r)^{\theta}\right\}.\end{eqnarray}
That is, the counting processes defined by setting the $j^{th}$ event of the process to occur at times $$n^{\theta}\sum_{i=k}^{k+j-1}  (X_{i+1:n}-X_{i:n})/M(p, \theta)~~\mbox{and}~~n^{\theta}\sum_{i=k-1}^{k-j} (X_{i+1:n}-X_{i:n})/M(p, \theta) $$ converge to i.i.d. renewal processes with Wei($1/\theta$) renewal distribution. They reduce  to  homogeneous Poisson processes with unit intensity only when $\theta=1$ and $f(x_p)$ is finite and positive.
\end{enumerate}
\end{theorem}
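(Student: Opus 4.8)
\medskip
\noindent\textbf{Proof plan.}
The plan is to reduce both parts to Lemma~\ref{lemma1} through the quantile representation $X_{i:n}\overset{d}{=}F^{-1}(U_{i:n})$ and the factorization in (\ref{link}), upgrading the leading factor to a constant by means of the regularity conditions. Realize all the order statistics on a space carrying a single i.i.d.\ Uniform$(0,1)$ sequence, so that $X_{i:n}=F^{-1}(U_{i:n})$ and, by Smirnov's theorem (\ref{smirnov}) --- available under any of the stated hypotheses, each of which forces $F(x)=p$ to have the unique solution $x_p$ --- $U_{k+j:n}\overset{a.s.}{\to}p$ for every fixed $j$. For $j=-(s-1),\ldots,0,\ldots,r$ set $p_{j,n}=U_{k+j-1:n}$ and $h_{j,n}=\Delta_{k+j-1,n}=U_{k+j:n}-U_{k+j-1:n}$, which is a.s.\ positive (uniform order statistics are a.s.\ distinct) and a.s.\ tends to $0^+$, and factor
\[
n^{\tau}(X_{k+j:n}-X_{k+j-1:n})=R_{j,n}\,(n h_{j,n})^{\tau},\qquad R_{j,n}=\frac{F^{-1}(p_{j,n}+h_{j,n})-F^{-1}(p_{j,n})}{h_{j,n}^{\tau}},
\]
with $\tau=1$ in part (a) and $\tau=\theta$ in part (b). By exchangeability of the uniform spacings together with (\ref{unifsp1}) (equivalently Lemma~\ref{lemma1}),
\[
\bigl(n h_{j,n}\bigr)_{j=-(s-1)}^{r}\overset{d}{\to}(Z^*_s,\ldots,Z^*_1,Z_1,\ldots,Z_r),
\]
an i.i.d.\ Exp($1$) vector of length $r+s$; the continuous mapping theorem raises each coordinate to the power $\tau$, the limits in part (b) being i.i.d.\ Wei($1/\theta$) because $Z^{\theta}$ has survival function $e^{-t^{1/\theta}}$.

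\medskip
\noindent\textbf{The leading factor.}
Next I would show $R_{j,n}\overset{a.s.}{\to}1/f(x_p)$ in part (a) and $R_{j,n}\overset{a.s.}{\to}M(p,\theta)$ in part (b). Under (\ref{condf}) the first claim is precisely (\ref{scale1}). Under (\ref{demb}) --- with $f(x_p)$ finite and positive in part (a), or jointly with (\ref{den}) in part (b) --- I would evaluate the deterministic limit (\ref{demb}) along the a.s.-convergent random pair $(p_{j,n},h_{j,n})\to(p,0^+)$, which is legitimate because that pair stays in the domain ($h_{j,n}>0$ and $p_{j,n}\in(0,1)$ a.s.) and (\ref{demb}) is a genuine joint limit, obtaining
\[
\frac{F^{-1}(p_{j,n}+h_{j,n})-F^{-1}(p_{j,n})}{F^{-1}(p+h_{j,n})-F^{-1}(p)}\overset{a.s.}{\to}1,
\]
and then combine this with the differentiability of $F^{-1}$ at $p$ (derivative $1/f(x_p)$) in part (a), or with the one-sided ($h\to0^+$) form of (\ref{den}) --- using that $F^{-1}$ is nondecreasing, so the numerators are nonnegative --- in part (b).

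\medskip
\noindent\textbf{Assembling and the processes.}
Since each coordinate of $(R_{j,n})_{j}$ converges a.s., the whole vector converges in probability to the relevant constant vector; because a companion converging in probability to a constant converges jointly with any weakly convergent sequence, the multivariate Slutsky theorem then turns the componentwise products into the two displayed limits of the theorem. The counting-process statements follow at once: the left block and the right block of each limiting vector are independent, and i.i.d.\ interarrival times characterize the associated renewal process --- Exp($1$) a unit-rate homogeneous Poisson process (as recalled in Lemma~\ref{lemma1}), Wei($1/\theta$) the stated renewal process, which is Poisson precisely when $\theta=1$ (and then $f(x_p)$ must be finite and positive for the normalization to coincide with that of part (a)); weak convergence of the finitely many normalized spacings passes to the truncated counting processes because the limiting arrival times are a.s.\ distinct and positive.

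\medskip
\noindent\textbf{Main obstacle.}
The one genuinely delicate step is the substitution in the second paragraph: (\ref{demb}), (\ref{scale1}) and (\ref{den}) are purely deterministic limit statements, and one must justify plugging in the random arguments $p_{j,n}=U_{k+j-1:n}$ and $h_{j,n}=\Delta_{k+j-1,n}$. I would handle this $\omega$ by $\omega$ off a null set, checking that $\bigl(p_{j,n}(\omega),h_{j,n}(\omega)\bigr)\to(p,0^+)$ with $h_{j,n}(\omega)>0$, so that the deterministic limit applies along that sequence; the subtlety to watch is that (\ref{demb}) must be read as a joint limit, not an iterated one, which is exactly what makes composition with an a.s.-convergent pair valid. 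Everything else --- the exchangeability step, the continuous mapping, the Slutsky argument, and the translation into renewal/Poisson language --- is routine.
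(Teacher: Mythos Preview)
Your proposal is correct and follows essentially the same route as the paper: the quantile representation and factorization (\ref{link}), Lemma~\ref{lemma1} for the joint exponential limit of the uniform spacings, the a.s.\ convergence of the leading ratio via (\ref{condf}) or the two-step decomposition through (\ref{demb}) combined with differentiability of $F^{-1}$ (part (a)) or with (\ref{den}) (part (b)), and then Slutsky. Your explicit $\omega$-by-$\omega$ justification for substituting the random pair $(p_{j,n},h_{j,n})$ into the deterministic joint limit (\ref{demb}) is exactly the point the paper leaves implicit, so you have in fact articulated the one subtle step more carefully than the paper does.
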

\begin{proof}
To prove part $(a)$, we need to show that (\ref{scale1}) holds whenever (\ref{condf}) holds, or  if  (\ref{demb}) holds and $f(x_p)$ is finite and positive.  Then we would use (\ref{scale1}), (\ref{link}), Slutsky's Theorem, and  Lemma \ref{lemma1}.  We have shown earlier that (\ref{scale1}) holds whenever (\ref{condf}) is satisfied. 
 
 If (\ref{demb}) holds and $f(x_p)$ is finite and positive, the left side expression in (\ref{scale1}) can be written as
\begin{equation*}\label{scale2}
\frac{F^{-1}(U_{k+j:n} +\Delta_{k+j,n}) - F^{-1}(U_{k+j:n}) }{{F^{-1}(p +\Delta_{k+j,n}) - F^{-1}(p) }} \cdot \frac{F^{-1}(p +\Delta_{k+j,n}) - F^{-1}(p) }{\Delta_{k+j,n}},\end{equation*}
where the first factor converges to 1 and the second factor converges to $1/f(x_p)$, both almost surely. Thus, (\ref{scale1})  is established.

For $(b)$, the idea is similar.  We note that 
$ n^{\theta}(F^{-1}(\kjn+\dkjn)-F^{-1}(\kjn))$ can be written as $$\frac{F^{-1}(\kjn+\dkjn)-F^{-1}(\kjn)}{\dkjn^\theta}(n \dkjn)^\theta.$$
Assumption (\ref{den}) coupled with (\ref{demb}) ensures that the first factor above converges almost surely  to  $M(p, \theta).$
Since $P(n \dkjn > w) \to \exp\{-w\}$ for all $w > 0$, 
$P((n \dkjn)^{\theta} > w) \to \exp\{-w^{1/\theta}\}$; that is,  
$ \left(n\dkjn\right)^\theta \overset{d} \to Z^\theta.$ 
The claim in (\ref{th2}) now readily follows.  If $\theta = 1$ and (\ref{den}) holds, then $M(p;1) = (F^{-1}(x_p))^{'} = 1/f(x_p)$ has to be positive and finite, and the limiting arrival process would be Poisson. \hfill$\Box$
\end{proof}
\noindent{\bf Remark.} The condition (\ref{den}) does not imply (\ref{demb}); nor does it ensure that $f(x_p)$ is finite and positive.  Consider the pdf  \[
f(x)={(\eta +1)}|x|^{\eta}/2, \qquad |x|\leq 1, \eta>-1.
\]
This is a corrected version of the pdf given in Chanda (1975), and discussed in Ghosh and Sukhatme (1981) (we thank a reviewer for noticing the error).  
The associated quantile function is given by
 \[
F^{-1}(u)=\left\{ 
  \begin{array}{l l}
    -(1-2u)^{1/(\eta+1)}, & \quad 0 < u \leq \frac{1}{2} \\[.1in]
~~(2u-1)^{1/(\eta+1)},& \quad \frac{1}{2} \leq u < 1.
  \end{array} \right.
\]
This quantile function fails to satisfy the condition in (\ref{demb}) when $p=0.5$ and $\eta$ is a positive even integer, but satisfies (\ref{den}) with $\theta={1}/{(\eta+1)}$. Here $\xa=0$, $M(p, \theta) =  2^{\theta}$; $f(\xa)$ is 0 or $\infty$ depending on whether $\eta>0$ or $\eta \in (-1,0)$.

\subsection{Asymptotic Independence of a Central Order Statistic and Spacings in Its Neighborhood}
We now assume $k/n= p+o(n^{-1/2})$ and establish the independence of $X_{k:n}$ and spacings around it. \subsubsection{The Uniform Parent}
Using the (well-known) joint pdf of the consecutive standard uniform order statistics $U_{k-s:n}, \ldots, U_{k:n}, \ldots, U_{k+r:n}$, we first obtain the joint pdf of  appropriately normalized $U_{k:n}$ and the vector $$\left\{n(U_{k+j:n}-U_{k +j-1:n}), j = -(s-1), \ldots, 0, \ldots, r \right\},$$
and thus determine the limiting form of the joint pdf. 

 The joint pdf of
 $U_{k-s:n},\cdots , U_{k+r:n}$ is given by
\[\frac{n!}{(k-s-1)!(n-k-r)!}u_{k-s}^{k-s-1}(1-u_{k+r})^{n-k-r},\]for $ 0 <u_{k-s}< \cdots < u_k < \cdots < u_{k+r}< 1.$
With $t_n = \sqrt{[p(1-p)]/{n}}$ consider the transformation
\begin{eqnarray}\label{vis}
v_0&=&(u_{k:n}-p)/t_n,\nonumber\\ v^*_1&=&n(u_{k:n}-u_{k-1:n}),\ldots, v^*_{s}=n(u_{k-s+1:n}-u_{k-s:n});\nonumber \\
v_1 &=&n(u_{k+1:n}-u_{k:n}),\ldots, v_{r} = n(u_{k+r:n}-u_{k+r-1:n}).\end{eqnarray}
Hence
\begin{eqnarray*}
  u_{k:n} &=& p+t_n v_0 \\
  u_{k+1:n}&=& p+t_n v_0+(1/n)v_1\\
  \cdots\\
  u_{k+r:n}&=&p+t_nv_0+(1/n)(v_1+v_2+\cdots+v_{r})\\
  \cdots\\
  u_{k-1:n}&=& p+t_n v_0-(1/n)v^*_1\\
  \cdots\\
  u_{k-s:n}&=&p+t_nv_0-(1/n)(v^*_1+v^*_2+\cdots+v^*_{s}),
\end{eqnarray*}
and the Jacobian is $\left|\frac{\partial \mathbf{u}}{\partial \mathbf{v}}\right|
={t_n/n^{r+s}}
$.
The joint pdf of $V_0, V_1, \ldots,V_{r},  V^*_1, \ldots,V^*_{s}$ is
\begin{eqnarray*}
  &&\frac{n!}{(k-s-1)!(n-k-r)!}\frac{t_n}{n^{r+s}}\{p+t_nv_0-(1/n)(v^*_1+v^*_2+\cdots+v^*_{s})\}^{k-s-1}\\
  &&\times\{1-p-t_nv_0-(1/n)(v_1+v_2+\cdots+v_{r})\}^{n-k-r},\\
&\sim& \frac{n!}{(k-1)!(n-k)!}{t_n p^s(1-p)^r} (p + t_n v_0)^{k-s-1}(1-p-t_nv_0)^{n-k-r}\\ 
&&\times \{1-\frac{(v^*_1+v^*_2+\cdots+v^*_{s})}{n(p + t_n v_0)}\}^{k-s-1}\\
  &&\times \{1-\frac{(v_1+v_2+\cdots+v_{r})}{n(1-p-t_nv_0)}\}^{n-k-r},\\
&=& \eta_1 \times \eta_2 \times \eta_3,\end{eqnarray*}
say.  Since $k = np + o(\sqrt{n})$ and $ t_n \to 0$,$$\eta_2 \to \exp\{-(v^*_1+v^*_2+\cdots+v^*_{s})\}~~\mbox{and}~~ \eta_3\to \exp\{-((v_1+v_2+\cdots+v_{r})\}$$   for fixed $r$ and $s$. Further,
\begin{eqnarray*}
\eta_1 &=& \frac{n!}{(k-1)!(n-k)!}{t_n p^s(1-p)^r} (p + t_n v_0)^{k-s-1}(1-p-t_nv_0)^{n-k-r}\\
&\sim&\frac{n!}{(k-1)!(n-k)!}{t_n} (p + t_n v_0)^{k-1}(1-p-t_nv_0)^{n-k},
\end{eqnarray*}
the pdf of $V_0 = (U_{k:n} - p)/t_n$.  With $k = np + o(\sqrt{n})$, using Stirling's approximations for the factorials and the expansion $\log(1+x) = x - x^2/2 + o(x^2)$ for $x$ close to 0, it can be shown that this pdf  converges to $\phi(v_0)$.

The conclusion of the above discussion is summarized below.

\begin{lemma}\label{lem2} With $k = np + o(\sqrt{n})$, asymptotically (i)  $V_0, V_1, \ldots, V_r, V^*_1, \ldots, V^*_s$ are  mutually independent, (ii) $V_0$ is $N(0,1)$, and (iii) the remaining rvs are i.i.d. standard exponential, where the rvs involved are defined in (\ref{vis}).
\end{lemma}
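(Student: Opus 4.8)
The plan is to show that the joint density of $(V_0, V_1, \ldots, V_r, V^*_1, \ldots, V^*_s)$ converges pointwise on its support to $\phi(v_0)\,\prod_{i=1}^{r} e^{-v_i}\,\prod_{i=1}^{s} e^{-v^*_i}$, and then to invoke Scheff\'e's lemma to upgrade this to weak convergence of the random vector. Since the limiting density is a product, the three assertions --- mutual independence, normality of $V_0$, and the i.i.d.\ standard exponential behavior of the remaining coordinates --- all follow simultaneously.

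Concretely, I would begin from the joint density of $U_{k-s:n},\ldots,U_{k+r:n}$ recorded above, apply the change of variables (\ref{vis}) with Jacobian $t_n/n^{r+s}$, and perform the factorization into $\eta_1\eta_2\eta_3$ indicated in the excerpt, absorbing the constant $p^s(1-p)^r$ and the shifts of $r,s$ in the exponents into asymptotically negligible corrections (legitimate because $r,s$ are fixed, $t_n\to 0$, and $p+t_nv_0$, $1-p-t_nv_0$ stay bounded away from $0$ and $1$ uniformly on compact sets in $v_0$). For the two \emph{spacing} factors, $\eta_2=\{1-(v^*_1+\cdots+v^*_s)/(n(p+t_nv_0))\}^{k-s-1}$ and $\eta_3=\{1-(v_1+\cdots+v_r)/(n(1-p-t_nv_0))\}^{n-k-r}$; using $k=np+o(\sqrt n)$ and $t_n\to0$, the product of exponent and argument converges to $v^*_1+\cdots+v^*_s$ (resp.\ $v_1+\cdots+v_r$), so $\eta_2\to\exp\{-\sum_i v^*_i\}$ and $\eta_3\to\exp\{-\sum_i v_i\}$ pointwise.

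The substantive step is $\eta_1\to\phi(v_0)$, which is a local central limit theorem for the central uniform order statistic $U_{k:n}$. Here I would substitute Stirling's formula for $n!/((k-1)!(n-k)!)$, collect the $t_n$ factor, and expand $\log\{(p+t_nv_0)^{k-1}(1-p-t_nv_0)^{n-k}\}$ via $\log(1+x)=x-x^2/2+o(x^2)$ with $x=O(t_n|v_0|)=O(n^{-1/2})$: the $O(n^{-1/2})$ linear terms cancel against the Stirling contribution precisely because $k=np+o(\sqrt n)$, the quadratic terms collapse to $-v_0^2/2$, and the remaining constants combine into $(2\pi)^{-1/2}$. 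I expect this bookkeeping to be the main obstacle, since one must track the $o(\sqrt n)$ gap between $k$ and $np$ carefully enough to see that it perturbs neither the linear cancellation nor the limiting constant --- it is exactly here that the hypothesis $k=np+o(\sqrt n)$, stronger than $k/n\to p$, is needed.

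To close, pointwise convergence of densities to a genuine probability density justifies, by Scheff\'e's lemma, convergence in total variation, hence in distribution, of $(V_0,V_1,\ldots,V_r,V^*_1,\ldots,V^*_s)$ to a vector with the product law described; reading off the marginals then gives the three claims. Alternatively one may bypass Scheff\'e by producing an integrable dominating function for $\eta_1\eta_2\eta_3$ on each compact set and controlling the tails directly, but the density-convergence route is the cleanest here.
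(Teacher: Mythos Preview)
Your proposal is correct and follows essentially the same route as the paper: the paper derives the joint density of $(V_0,V_1,\ldots,V_r,V^*_1,\ldots,V^*_s)$ via the same change of variables with Jacobian $t_n/n^{r+s}$, performs the identical factorization $\eta_1\eta_2\eta_3$, and obtains the limits of each factor exactly as you describe (including the Stirling/log-expansion treatment of $\eta_1$). The only addition in your write-up is the explicit appeal to Scheff\'e's lemma to pass from pointwise density convergence to convergence in distribution, which the paper leaves implicit.
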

\subsubsection{Arbitrary Parent}
 By establishing density convergence under the assumption that $k/n= p+o(n^{-1/2})$ we have shown above that 
\begin{equation}\label{norm1} \frac{\sqrt{n}}{\sqrt{p(1-p)}}(U_{k:n}-p) \overset{d} \rightarrow N(0,1).\end{equation}
The conclusion in (\ref{norm1}) also follows from J. K. Ghosh (1971) who has shown that if $f(x_p)$ is positive and finite, 
\begin{equation*}
\frac{\sqrt{n}f(x_p)}{\sqrt{p(1-p)}}(X_{k:n}-x_p) \overset{d} \rightarrow N(0,1).
\end{equation*}
We have shown in Section 3.1 that when $k/n= p+o(1)$, if condition (\ref{condf}) holds or  if  (\ref{demb}) holds and $f(x_p)$ is finite and positive, 
$$\frac{X_{k+j+1:n}-X_{k +j:n}}{U_{k+j+1:n}-U_{k +j:n}}=\frac{F^{-1}(U_{k+j+1:n})-F^{-1}(U_{k +j:n})}{U_{k+j+1:n}-U_{k +j:n}} \stackrel{a.s.}{\to} \frac{1}{f(x_p)}; $$  and if (\ref{demb}) and (\ref{den}) hold,
$$\frac{X_{k+j+1:n}-X_{k +j:n}}{\Delta_{k+j:n}^{\theta}}=\frac{F^{-1}(U_{k+j:n} +\Delta_{k+j:n})-F^{-1}(U_{k +j:n})}{\Delta_{k+j:n}^{\theta}} \stackrel{a.s.}{\to} M(p, \theta), $$ 
for $ j = -s, \ldots, 0, \ldots, r -1.$
Further, $$\frac{X_{k:n}-x_p}{U_{k:n}-p}=\frac{F^{-1}(U_{k:n})-F^{-1}(p)}{U_{k:n}-p} \stackrel{a.s.}{\to} \frac{1}{f(x_p)}$$
whenever $f(x_p)$ is finite and positive; 
 $$\frac{|X_{k:n}-x_p|}{|U_{k:n}-p|^{\theta}}=\frac{|F^{-1}(U_{k:n})-F^{-1}(p)|}{|U_{k:n}-p|^{\theta}} \stackrel{a.s.}{\to} M(p, \theta),$$
  if (\ref{den}) holds. 

In view of Lemma {\ref{lem2}}, assuming $k = np + o(\sqrt{n})$ we have established the asymptotic independence of the normalized spacings $({X_{k+j:n}-X_{k +j-1:n}})$ introduced in Theorem \ref{theorem1} and appropriately normalized $X_{k:n}$ under the conditions stated there.   This discussion leads to the following result.

\begin{theorem}\label{theorem2}
Let $k/n= p+o(n^{-1/2})$, $0 <p < 1$, and $r$ and $s$ be fixed positive integers.
\begin{itemize}
\item[(a)]
 If condition (\ref{condf}) holds, or  if  (\ref{demb}) holds and $f(x_p)$ is finite and positive, 
$$\left(\frac{\sqrt{n}f(x_p)(X_{k:n}-x_p)}{\sqrt{p(1-p)}}, nf(\xa)(X_{k+j:n}-X_{k +j-1:n}),  -(s-1) \leq j \leq r \right)$$ $$ \overset{d}\to (N(0,1), Z^*_s, \ldots, Z^*_1, Z_1, \ldots, Z_r).$$
\item[(b)]
Assume (\ref{demb}) and (\ref{den}) hold.  Then
\begin{eqnarray*}
\hspace{-.2in}\left({\frac{\sqrt{n}}{\sqrt{p(1-p)}}}\left|\frac{(X_{k:n}-x_p)}{M(p,\theta)}\right|^{1/\theta},  n\left(\frac{X_{k+j+1:n}-X_{k +j:n}}{M(p, \theta)}\right)^{1/\theta} ,-s \leq j \leq r-1  \right)\\ \overset{d}\to (|N(0,1)|, Z^*_s, \ldots, Z^*_1, Z_1, \ldots, Z_r).\end{eqnarray*}
\end{itemize}
In both cases the $Z_i$'s and $Z^*_i$'s are Exp($1$) rvs, and the $r+s+1$ components in the limit vector are mutually independent.
\end{theorem}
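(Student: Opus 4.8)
The plan is to push everything onto the uniform parent through the joint distributional identity $X_{i:n}\stackrel{d}{=}F^{-1}(U_{i:n})$, valid for the entire block $U_{k-s:n},\ldots,U_{k+r:n}$ realized on a single probability space, and then to write each coordinate of the target vector as a ``scale ratio'' — converging almost surely to a constant — times the matching coordinate of the vector handled in Lemma~\ref{lem2}, after which the multivariate Slutsky theorem assembles the pieces. Recall Lemma~\ref{lem2}: under $k=np+o(\sqrt n)$,
\[
\bigl(V_0,V_1,\ldots,V_r,V^*_1,\ldots,V^*_s\bigr)\overset{d}{\to}\bigl(N(0,1),Z_1,\ldots,Z_r,Z^*_1,\ldots,Z^*_s\bigr),
\]
all $r+s+1$ limiting coordinates being independent, where $V_0=\sqrt{n/[p(1-p)]}\,(U_{k:n}-p)$ and the other $V$'s are the $n$-scaled uniform spacings of~(\ref{vis}). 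Each hypothesis — (\ref{condf}), or ``(\ref{demb}) with $f(x_p)$ finite and positive'', or ``(\ref{demb}) and~(\ref{den})'' — forces $F(x)=p$ to have the unique root $x_p$, so by Smirnov's result~(\ref{smirnov}) one has $U_{k:n}\overset{a.s.}{\to}p$ and $\Delta_{k+j,n}\overset{a.s.}{\to}0$ for each fixed $j$.

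For part~(a), I would apply the identity coordinatewise to get
\begin{align*}
\frac{\sqrt n\,f(x_p)}{\sqrt{p(1-p)}}\bigl(X_{k:n}-x_p\bigr)&\stackrel{d}{=}f(x_p)\,\frac{F^{-1}(U_{k:n})-F^{-1}(p)}{U_{k:n}-p}\cdot V_0,\\
nf(x_p)\bigl(X_{k+j:n}-X_{k+j-1:n}\bigr)&\stackrel{d}{=}f(x_p)\,\frac{F^{-1}(U_{k+j:n})-F^{-1}(U_{k+j-1:n})}{U_{k+j:n}-U_{k+j-1:n}}\cdot n\bigl(U_{k+j:n}-U_{k+j-1:n}\bigr).
\end{align*}
By the almost sure limits established above (which rest on the differentiability of $F^{-1}$ at $p$ and the interpolation argument in the proof of Theorem~\ref{theorem1}), each ratio above tends almost surely to $1/f(x_p)$, so every prefactor $f(x_p)\times(\text{ratio})$ tends almost surely, hence in probability, to $1$. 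Thus the vector of prefactors tends in probability to $(1,\ldots,1)$; combining this with the weak convergence of Lemma~\ref{lem2} and applying multivariate Slutsky with the continuous coordinatewise-product map gives the stated joint limit $(N(0,1),Z^*_s,\ldots,Z^*_1,Z_1,\ldots,Z_r)$, the independence of its components being inherited verbatim from Lemma~\ref{lem2}.

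Part~(b) is identical in structure, with $1/\theta$-powers and absolute values. Using $\Delta_{k+j,n}\ge 0$, I would factor
\begin{align*}
\frac{\sqrt n}{\sqrt{p(1-p)}}\left|\frac{X_{k:n}-x_p}{M(p,\theta)}\right|^{1/\theta}&\stackrel{d}{=}\left(\frac{|F^{-1}(U_{k:n})-F^{-1}(p)|}{M(p,\theta)\,|U_{k:n}-p|^{\theta}}\right)^{1/\theta}|V_0|,\\
n\left(\frac{X_{k+j+1:n}-X_{k+j:n}}{M(p,\theta)}\right)^{1/\theta}&\stackrel{d}{=}\left(\frac{F^{-1}(U_{k+j:n}+\Delta_{k+j,n})-F^{-1}(U_{k+j:n})}{M(p,\theta)\,\Delta_{k+j,n}^{\theta}}\right)^{1/\theta}n\Delta_{k+j,n}.
\end{align*}
Under~(\ref{demb}) and~(\ref{den}) the bracketed ratios tend almost surely to $M(p,\theta)$, so the prefactors tend to $1$; Lemma~\ref{lem2} together with continuity of $x\mapsto|x|$ gives $|V_0|\overset{d}{\to}|N(0,1)|$ jointly with the spacing coordinates tending to independent $Z$'s and $Z^*$'s, and multivariate Slutsky again finishes the argument.

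The one genuinely delicate point — everything else being bookkeeping — is the \emph{almost sure} convergence of the scale ratios, whose arguments $U_{k:n}$ and $U_{k+j:n}$ are random and only drift to $p$ rather than equalling it. This is exactly where~(\ref{demb}) (or the stronger~(\ref{condf})) enters: one interpolates, as in the proof of Theorem~\ref{theorem1}, by inserting the deterministic increment $F^{-1}(p+\Delta_{k+j,n})-F^{-1}(p)$, whose behaviour is governed by the differentiability of $F^{-1}$ at $p$ (or by~(\ref{den})), while~(\ref{demb}) controls the ratio of the random increment to it. Once those almost sure limits are secured, the joint statement follows at once, and I anticipate no further obstacle.
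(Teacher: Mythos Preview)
Your proposal is correct and follows essentially the same route as the paper: reduce to the uniform parent via $X_{i:n}\stackrel{d}{=}F^{-1}(U_{i:n})$, invoke Lemma~\ref{lem2} for the joint limit of $(V_0,V_1,\ldots,V_r,V_1^*,\ldots,V_s^*)$, show that each scale ratio converges almost surely to the appropriate constant (using differentiability of $F^{-1}$ at $p$ for the $V_0$ coordinate, and the interpolation device from the proof of Theorem~\ref{theorem1} for the spacing coordinates), and finish with multivariate Slutsky. One small slip: in part~(b) you write that ``the bracketed ratios tend almost surely to $M(p,\theta)$,'' but since the bracket already divides by $M(p,\theta)$ it tends to $1$; your conclusion that the prefactors tend to $1$ is nonetheless correct.
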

\subsection{Remarks - The Central Case}

Siddiqui (1960) considered higher order spacings around a central order statistic and showed that when $F$ is continuously twice differentiable and $f(x_p)$ is finite and positive, the rvs
\begin{equation*}\label{sidd}
\frac{\sqrt{n}f(x_p)(X_{k:n}-\xa)}{\sqrt{p(1-p)}}, \quad {nf(x_p)}(X_{k:n}-X_{k-s:n}) \quad \text{and  }   {nf(x_p)}(X_{k+r:n}-X_{k:n})
\end{equation*}
are asymptotically independent when $k/n \to p \in (0,1)$ with $r/n$ and $s/n$ tending to zero; further,  asymptotically the higher order spacings are Gam$(r)$ and Gam$(s)$, respectively.  We have proved a more refined result here with less assumptions on the properties of $F$,  but have taken $r$ and $s$ to be fixed.

 Pyke's (1965) classic paper on spacings shows (Theorem 5.1) that\\ $nf(x_{p_1})(X_{i:n}-X_{i-1:n})$ and $nf(x_{p_2})(X_{j:n}-X_{j-1:n})$ with $i/n \to p_1$ and $j/n \to p_2$ where $0<p_1 \neq p_2<1$ are asymptotically i.i.d. Exp($1$) rvs. The key difference is that the spacings considered there are far apart, while our focus is on adjacent spacings around $X_{i:n}$.
 
 The asymptotic half-normal distribution of the normalized central order statistic under the conditions of part (b) of Theorem \ref{theorem2} is comparable to Chanda's (1975) conclusion; our condition (\ref{den}) is on $F^{-1}$ whereas his comparable condition (given as  (6) there) is on $F$.
\section{The Intermediate Case}
Here we lean on the work of Falk (1989) and directly examine the convergence of the joint pdf of an intermediate order statistic $X_{k:n}$ and spacings around it.  We assume $k \to \infty$ but $k/n \to 1$ as $n \to \infty$ such that $n-k \to \infty$ and assume one of the von Mises sufficient conditions stated in (\ref{sG1})--(\ref{sG3}) holds. Theorem 2.1 of Falk (1989) shows that 
\begin{equation}\label{intconsts}
(X_{k:n} - a_n)/b_n \stackrel{d}{\to} N(0,1)~~ \mbox{with}~~ a_n  = F^{-1}(k/n); b_n = \sqrt{n-k}/(n f(a_n)).
\end{equation}
This is established by showing that the pdf of $(X_{k:n}-a_n)/{b_n}$ at $x$,
\begin{equation}\label{os1}
\frac{n!}{(k-1)!(n-k)!}[F(a_n+b_nx)]^{k-1}[1-F(a_n+b_n x)]^{n-k}b_n f(a_n +b_n x) \to \phi(x),
\end{equation}
for all real $x$.

Consider the joint pdf of $X_{k-s:n}, \ldots,X_{k:n},\ldots , X_{k+r:n}$:
\[\frac{n!}{(k-s-1)!(n-k-r)!}[\prod_{j=1}^{s}f(x_j^*)][F(x^*_s)]^{k-s-1}f(x_0)[\prod_{j=1}^{r}f(x_j)][1-F(x_{r})]^{n-k-r},\]
for $x_s^*<\cdots < x^*_1<x_0 <x_1 < \cdots < x_r$.
Define
\begin{equation}
c_n=\frac{1}{nf(a_n)}=\frac{b_n}{\sqrt{n-k}}= \frac{1}{nf(F^{-1}(k/n))}\end{equation}
and consider the transformation
\begin{eqnarray}\label{yis}
y_0&=&{(x_{k:n}-a_n)}/{b_n}, \nonumber \\
y_1&=&(x_{k+1:n}-x_{k:n})/c_n,\ldots, y_{r} = (x_{k+r:n}-x_{k+r-1:n})/{c_n},\nonumber \\
y^*_1&=&(x_{k:n}-x_{k-1:n})/c_n,\ldots, y^*_{s}=(x_{k-s+1:n}-x_{k-s:n})/{c_n},
\end{eqnarray}
so that \begin{eqnarray*}
  x_{k:n} &=& a_n+b_n y_0; \\
  x_{k+j:n}&=& a_n+b_n y_0+c_n(y_1+ \cdots + y_j),~ j = 1, \ldots, r;\\
   x_{k-j:n}&=& a_n+b_n y_0-c_n(y^*_1+ \cdots + y^*_j),~ j = 1, \ldots, s.
\end{eqnarray*}
The Jacobian  is $\left|\frac{\partial \mathbf{x}}{\partial \mathbf{y}}\right|
={b_nc_n^{r+s}}$
and the joint pdf of $Y_0, Y_1, \ldots,Y_{r}, Y^*_1, \ldots,Y^*_{s}$, defined in (\ref{yis}), is
\begin{eqnarray}
  &&\hspace{-.4in}\frac{n!}{(k-1)! (n-k)!}[F(a_n+b_n y_0)]^{k-1}[1-F(b_n + a_ny_0)]^{n-k}b_nf(a_n + b_ny_0) \label{t1}\\
  &\times&\left(\frac{F(a_n+b_ny_0-c_n(y^*_1+\cdots+y^*_{s}))}{F(a_n+b_ny_0)}\right)^{k-s-1}\label{t2}\\
    &\times& \prod_{j=1}^s \left\{c_n (k - j)\frac{f(a_n + b_n y_0-c_n(y^*_1+\cdots + y^*_j))}{F(a_n + b_ny_0)} \right\} \label{t3} \\
  &\times& \prod_{j=1}^r \left\{c_n (n-k - j + 1)\frac{f(a_n + b_n y_0+c_n(y_1+\cdots + y_j))}{1-F(a_n + b_ny_0)} \right\} \label{t4}\\
  &\times& \left(\frac{1-F(a_n+b_ny_0+c_n(y_1+\cdots+y_{r}))}{1-F(a_n+b_ny_0)}\right)^{n-k-r} \label{t5}\\
  &=& \tau_1 \times \tau_2 \times \tau_3 \times \tau_4 \times \tau_5, \label{t_all}\end{eqnarray}
where $\tau_1-\tau_5$ are respectively given by (\ref{t1})-(\ref{t5}), $y_0$ is real, and $y_i, y^*_i > 0$.  

From (\ref{os1}) it follows that $\tau_1 \to \phi(y_0)$.  We will  establish the limits for remaining $\tau_i$ using the following lemma.
\begin{lemma}\label{lemma3}
Suppose one of the von Mises conditions stated in (\ref{sG1})--(\ref{sG3}) holds, and  $n \to \infty$, $k/n \to 1$ such that $n-k \to \infty$. Then, with $a_n$ and $b_n$ given by (\ref{intconsts}), the following statements hold. 
\begin{itemize}
\item [(a)] For any real $y_0$, $\frac{1-F(a_n+b_ny_0)}{1-F(a_n)} = \frac{n(1-F(a_n+b_ny_0))}{n-k}\rightarrow 1$.\\
\item [(b)] If $c_n = b_n/\sqrt{n-k}$, for any $y_0$, $y_1$ real,  $\frac{f(a_n + b_ny_0+c_ny_1)}{f(a_n)}\rightarrow 1.$
\end{itemize}
\end{lemma}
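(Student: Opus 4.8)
The plan is to reduce both parts to a single smoothness-of-the-tail assertion and then verify that assertion from the von Mises conditions via the self-neglecting property of the reciprocal hazard. Since $F$ is continuous, $F(a_n)=k/n$, so $1-F(a_n)=(n-k)/n$; this already gives the displayed identity in part (a). Writing $\psi(x)=(1-F(x))/f(x)$, which is well defined near $x_1=F^{-1}(1)$ under any of (\ref{sG1})--(\ref{sG3}), and combining $1-F(a_n)=(n-k)/n$ with (\ref{intconsts}), one obtains the convenient identities $b_n=\psi(a_n)/\sqrt{n-k}$ and $c_n=b_n/\sqrt{n-k}=\psi(a_n)/(n-k)$, while $k/n\to1$ forces $a_n\to x_1$. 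Hence, for fixed reals $y_0,y_1$,
\[
b_ny_0=\psi(a_n)\,\delta_n,\qquad b_ny_0+c_ny_1=\psi(a_n)\,\delta_n',\qquad \delta_n=\frac{y_0}{\sqrt{n-k}},\quad \delta_n'=\frac{y_0}{\sqrt{n-k}}+\frac{y_1}{n-k},
\]
and $\delta_n,\delta_n'\to0$ because $n-k\to\infty$. So it suffices to prove that for every real sequence $\varepsilon_n\to0$ one has $\{1-F(a_n+\psi(a_n)\varepsilon_n)\}/\{1-F(a_n)\}\to1$ and $f(a_n+\psi(a_n)\varepsilon_n)/f(a_n)\to1$; part (a) is the first statement with $\varepsilon_n=\delta_n$, and part (b) the second with $\varepsilon_n=\delta_n'$.

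For the tail ratio I would use that $(\log(1-F))'=-1/\psi$ on a left neighborhood of $x_1$, so that after the substitution $t=a_n+\psi(a_n)u$,
\[
\Bigl|\log\frac{1-F(a_n+\psi(a_n)\varepsilon_n)}{1-F(a_n)}\Bigr|=\Bigl|\int_{0}^{\varepsilon_n}\frac{\psi(a_n)}{\psi\bigl(a_n+\psi(a_n)u\bigr)}\,du\Bigr|.
\]
It is therefore enough that the integrand be bounded (say by $2$) for all large $n$ and $|u|\le1$, for then the right side is at most $2|\varepsilon_n|\to0$. This boundedness — in fact $\psi(a_n+\psi(a_n)u)/\psi(a_n)\to1$ uniformly for $|u|\le1$ — is the self-neglecting property of $\psi$ as $x\to x_1$, which I would check case by case: under (\ref{sG1}), $\psi(x)\sim x/\alpha$ as $x\to\infty$; under (\ref{sG2}), $\psi(x)\sim(x_1-x)/\alpha$ as $x\to x_1-$ (and for large $n$ the arguments $a_n+\psi(a_n)u$ lie in the support, since $\psi(a_n)=o(x_1-a_n)$); in both cases the limit follows from elementary manipulation of the leading linear term. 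Under (\ref{sG3}), the condition states $m(x)/\psi(x)\to1$ with $m(x)=E(X-x\,|\,X>x)$, and $m$ is the auxiliary function of a $\Gamma$-varying tail, hence self-neglecting (equivalently $m'(x)\to0$; de Haan, 1970), and this transfers to $\psi$. This establishes part (a).

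Part (b) then follows by writing $f=(1-F)/\psi$, so that
\[
\frac{f(a_n+\psi(a_n)\varepsilon_n)}{f(a_n)}=\frac{1-F(a_n+\psi(a_n)\varepsilon_n)}{1-F(a_n)}\cdot\frac{\psi(a_n)}{\psi(a_n+\psi(a_n)\varepsilon_n)},
\]
where the first factor tends to $1$ by part (a) and the second by the self-neglecting property just used; taking $\varepsilon_n=\delta_n'$ gives the claim. The only step requiring more than change-of-variables bookkeeping is the self-neglecting property of $\psi$ in the Gumbel case (\ref{sG3}), which is classical $\Gamma$-variation theory; in the Fréchet and Weibull cases it is immediate from the asymptotic linearity of $\psi$ (alternatively one may argue directly, since (\ref{sG1}) gives $1-F\in\mathrm{RV}_{-\alpha}$ and $f\in\mathrm{RV}_{-\alpha-1}$ at $\infty$, with the analogous regular variation at $x_1$ under (\ref{sG2}), and $\psi(a_n)\varepsilon_n=o(a_n)$ resp.\ $o(x_1-a_n)$).
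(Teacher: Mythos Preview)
Your approach is sound but quite different from the paper's. The paper treats both parts as corollaries of results in Falk (1989) and Smirnov (1967): for (a) it quotes that $(X_{k:n}-a_n)/b_n\stackrel{d}{\to}N(0,1)$ together with Smirnov's characterization, which yields $\sqrt{n-k+1}\bigl\{1-\tfrac{1-F(a_n+b_ny_0)}{1-F(a_n)}\bigr\}\to y_0$ and hence the ratio tends to $1$; for (b) it invokes Falk's intermediate step that $f(a_n+b_n\theta y)/f(a_n)\to1$ uniformly in $\theta\in(0,1)$ whenever $a_n+b_ny\to x_1$, and then traps $b_ny_0+c_ny_1$ between $0$ and $2b_ny_0$. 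Your route instead works directly from the von Mises conditions through the reciprocal hazard $\psi=(1-F)/f$, reducing everything to a single local regularity statement for $\psi$ plus the identity $f=(1-F)/\psi$. This is more self-contained and makes transparent why $b_n$ and $c_n$ have the form they do; the paper's version, by contrast, gives the sharper rate $\sqrt{n-k}\,\bigl\{1-\tfrac{1-F(a_n+b_ny_0)}{1-F(a_n)}\bigr\}\to y_0$ in (a) as a by-product.

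One point needs tightening. Under (\ref{sG1}) and (\ref{sG2}) the function $\psi$ is \emph{not} self-neglecting: from $\psi(x)\sim x/\alpha$ (resp.\ $(x_1-x)/\alpha$) one gets $\psi(x+u\psi(x))/\psi(x)\to 1+u/\alpha$ (resp.\ $1-u/\alpha$), not $1$, and for $\alpha\le1$ and $|u|$ near $1$ the point $a_n+\psi(a_n)u$ can even leave the support. So your uniform-on-$|u|\le1$ claim is too strong. This does not damage the argument, because the integral runs only over $|u|\le|\varepsilon_n|\to0$, where the ratio is eventually as close to $1$ as desired in all three cases; your parenthetical regular-variation alternative (that $\psi(a_n)\varepsilon_n=o(a_n)$, resp.\ $o(x_1-a_n)$, combined with $1-F\in\mathrm{RV}_{-\alpha}$ and $f\in\mathrm{RV}_{-\alpha-1}$) is the clean way to say this. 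The genuine self-neglecting property is needed only in the Gumbel case (\ref{sG3}), and there your appeal to $m'(x)\to0$ and de Haan's $\Gamma$-variation theory is correct.
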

\begin{proof}
(a) From Theorem 2.1 of Falk (1989) it follows that  (\ref{intconsts}) holds under the conditions we have assumed, and the limit distribution is $N(0,1)$.
From Theorem 1 of Smirnov (1967) [Remark(\romannumeral 2) of Falk (1989)] it then follows that
$[n-k+1 +n(F(a_n+b_n y_0)-1)]/\sqrt{n-k+1}\rightarrow x$ for all real $y_0$. Thus, $$\sqrt{n-k+1}\cdot\left\{1-\frac{1-F(a_n+b_ny_0)}{1-F(a_n)}\right\}\rightarrow y_0$$ since $1-F(a_n)=(n-k)/n$. This implies that $(1-F(a_n+b_n y_0))/(1-F(a_n))\rightarrow 1$ for any  real $y_0$. 

(b)   In the proof of his Theorem 2.1, Falk establishes that whenever 
one of the sufficient conditions stated in (\ref{sG1})--(\ref{sG3}) holds, for any real $y$ for which $F(a_n + b_n  y) \to 1$ (or equvivalently $a_n + b_n  y \to x_1$) as $n \to \infty$,
\begin{equation}\label{Falk}
f(a_n + b_n \theta y)/f(a_n) \to 1
\end{equation}
uniformly for $\theta \in (0,1)$ where $a_n$ and $b_n$ are given in (\ref{intconsts}).  Part (a) that we just proved implies that for any real $y_0$, $F(a_n + b_n  y_0) \to 1$ as $n \to \infty$.  Thus from (\ref{Falk}) it follows that $f(a_n + b_n \theta y_0)/f(a_n) \to 1$
 for all real $y_0$.

For large $n-k$ and real $y_1$,
$$y_0+\frac{y_1}{\sqrt{n-k}} \in(0, 2y_0) ~\mbox{if} ~y_0>0; y_0+\frac{y_1}{\sqrt{n-k}}\in (2y_0, 0) ~\mbox{if}~ y_0 < 0.$$  Using (\ref{Falk}) with $y=2y_0$ we conclude that ${f(a_n +2y_0\theta b_n)}\{{f(a_n)}\}^{-1}\rightarrow 1$ uniformly  for all $0\le \theta<1$.  Since $a_n+b_ny_0+c_n y_1 = a_n+b_n(y_0+ y_1/{\sqrt{n-k}} )$ is in $(a_n, a_n +2y_0b_n)$ if $y_0 > 0$ and in $(a_n +2y_0b_n, a_n)$ if $y_0 < 0$.  Hence ${f(a_n + b_ny_0+c_ny_1)}/{f(a_n)}\rightarrow 1$ for all real $y_0 \neq 0$ and real $y_1$.
When $y_0=0$, for any real $y_1$, ${f(a_n + b_n({1}/{\sqrt{n-k}})y_1)}/{f(a_n)}\rightarrow 1$ since ${1}/{\sqrt{n-k}} \in (0, 1)$ and (\ref{Falk}) holds.  This completes the proof of the claim in (b).\hfill $\Box$
\end{proof}

With $y = y^*_1+\cdots+y^*_{s} >0$ consider the following component of $\tau_2$ in (\ref{t2}):
\begin{eqnarray*}
\frac{F(a_n+b_n y_0-c_n y)}{F(a_n+b_ny_0)}
  &=& 1-\frac{F(a_n+b_ny_0)-F(a_n+b_ny_0-c_ny)}{F(a_n+b_ny_0)}\\
&=&1-\frac{f(a_n+b_n y_0-\theta^*c_n y)}{F(a_n+b_n y_0)}\cdot c_n y\\
&=&1- \frac{yd_{k,n}}{k}
\end{eqnarray*}
where the second form above follows from the mean value theorem,  $\theta^*\in(0,1)$, and $$d_{k, n} =\frac{1}{F(a_n+b_n y_0)}\frac{f(a_n+b_n y_0+\theta^*c_n(-y))}{f(a_n)}\frac{k}{n}, $$ where we have used the fact that $c_n = 1/n f(a_n)$. From part (a) of Lemma \ref{lemma3} the first factor of $d_{k,n}$ above converges to 1; from part (b), the second factor approaches 1, and from our assumptions about $k$ and $n$ made in the intermediate case, the third factor also approaches 1.  Hence $d_{k, n} \to 1$ as $n, k \to \infty$.
Thus upon recalling (\ref{t2}) we obtain
$$
\tau_2 =\left(1- \frac{y d_{k,n}}{k}\right)^{k-s-1} \to \exp\{-y\} = \exp\{-(\sum_{j=1}^s y^*_j)\}, ~y^*_j > 0.$$

 With $y=y^*_1+\cdots +y^*_j$, the $j^{th}$ term in the product representing $\tau_3$ in (\ref{t3})  is given by
$$c_n (k - j )\frac{f(a_n + b_n y_0+c_n (-y))}{F(a_n + b_ny_0)} = \frac{k-j}{n} \frac{f(a_n + b_n y_0+c_n(-y))}{f (a_n)} \frac{1}{F(a_n + b_ny_0)}. $$
Using Lemma \ref{lemma3} as we did in proving $d_{k, n} \to 1$ as $n$ and $n-k \to \infty$, we conclude that the $j^{th}$ factor  of $\tau_3 \to 1$ for all $j$ and so does $\tau_3$.

With $y=y_1+y_2+\cdots +y_j$, the $j^{th}$ term in the product representing $\tau_4$ in (\ref{t4})   is given by
$$\hspace{-1.5in}\left\{c_n (n-k - j + 1)\frac{f(a_n + b_n y_0+c_n y)}{1-F(a_n + b_ny_0)} \right\}$$ $$\hspace{.5in} = \frac{n-k-j+1}{n(1-F(a_n))}\cdot \frac{f(a_n + b_n y_0+c_ny)}{f (a_n)} \cdot\frac{1- F(a_n)}{1-F(a_n + b_ny_0)}. $$
Since $F(a_n) = k/n$, the first factor above converges to 1, and Lemma \ref{lemma3} shows that the other two factors also approach 1 as $n$ and $n-k \to \infty$.  Thus $\tau_4 \to 1$.

Finally with $y = y_1+\cdots+y_{r}$ consider the following component of $\tau_5$ in (\ref{t5}):
$$
\frac{1-F(a_n+b_n y_0+c_n y)}{1-F(a_n+b_ny_0)}
  = 1-\frac{F(a_n+b_ny_0+c_ny)-F(a_n+b_ny_0)}{1-F(a_n+b_ny_0)}
$$
  \begin{eqnarray*}
&=&1-\frac{f(a_n+b_n y_0+\theta^*c_n y)}{1-F(a_n+b_n y_0)}\cdot c_n y~~\mbox{from the Mean Value Theorem}\\
&=& 1- y \frac{f(a_n+b_n y_0+\theta^*c_n y)}{f(a_n)}\cdot\frac{1-F(a_n)}{1-F(a_n+b_n y_0)}\cdot\frac{1}{n(1-F(a_n))} 
\end{eqnarray*}
where $\theta^*\in(0,1)$ and we have used the fact that $c_n = 1/n f(a_n)$. Lemma \ref{lemma3} implies that the first two factors of $y$ converge to 1 as $n, n-k \to \infty$, and the denominator of the last factor, $n(1-F(a_n)),$ is $n-k$.
Hence, $$\tau_5\sim(1-\frac{y_1+\cdots+y_{r}}{n-k})^{n-k-r}\rightarrow e^{-(y_1+\cdots+y_{r})},  y_1, \ldots, y_r > 0,$$ as $n, n-k \rightarrow\infty.$

Thus, we have formally proved the following theorem.
\begin{theorem}\label{theorem3}
Whenever one of the Von Mises condition (\ref{sG1})--(\ref{sG3}) holds, and $n, k,  n-k \to \infty$ such that $k/n \to 1$, with $a_n = F^{-1}(k/n)$ and $c_n = 1/nf(a_n)$,
$$((X_{k-s:n}-X_{k-s+1:n})/c_n,\cdots,(X_{k:n}-X_{k-1:n})/c_n, (X_{k+1:n}-X_{k:n})/c_n,\cdots,$$ $$\cdots, (X_{k+r:n}-X_{k+r-1:n})/c_n) \stackrel{d}{\rightarrow}(Z^*_s,\cdots, Z^*_{1}, Z_1,\cdots, Z_{r});$$
$$\frac{1}{c_n\sqrt{n-k}}(X_{k:n}-a_n) \stackrel{d}{\rightarrow} N(0,1),$$
where the $Z^*_i$'s and $Z_i$'s are Exp($1$) rvs, and the $r+s+1$ limiting rvs are mutually independent.
\end{theorem}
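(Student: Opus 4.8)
The plan is to prove the theorem by pointwise density convergence followed by Scheff\'e's lemma. Concretely, I would show that the joint density of the standardized vector $(Y_0, Y_1, \ldots, Y_r, Y^*_1, \ldots, Y^*_s)$ of (\ref{yis}) converges, for every $(y_0,\mathbf{y},\mathbf{y}^*) \in \mathbb{R}\times(0,\infty)^{r}\times(0,\infty)^{s}$, to the product
\[
\phi(y_0)\prod_{j=1}^{r} e^{-y_j}\prod_{j=1}^{s} e^{-y^*_j}.
\]
Since this limit is itself a probability density, Scheff\'e's lemma upgrades pointwise convergence to convergence in total variation, hence in distribution; and since the limiting density factors, the $N(0,1)$ coordinate, the $Z_i$'s and the $Z^*_i$'s are mutually independent, which is exactly the assertion.

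The first step is the change of variables: starting from the joint density of $X_{k-s:n},\ldots,X_{k+r:n}$ and applying (\ref{yis}) (Jacobian $b_n c_n^{r+s}$), one organizes the result into the five-factor product $\tau_1\times\tau_2\times\tau_3\times\tau_4\times\tau_5$ of (\ref{t1})--(\ref{t_all}). Here $\tau_1$ is exactly the density of $(X_{k:n}-a_n)/b_n$ evaluated at $y_0$, so $\tau_1\to\phi(y_0)$ is Falk's Theorem~2.1, recorded in (\ref{os1}); the second display of the theorem, $\frac{1}{c_n\sqrt{n-k}}(X_{k:n}-a_n)=(X_{k:n}-a_n)/b_n \stackrel{d}{\to}N(0,1)$, is the same statement since $c_n\sqrt{n-k}=b_n$. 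The remaining factors split into two types: $\tau_2$ and $\tau_5$ are ``bulk'' factors---relative-tail ratios raised to the powers $k-s-1$ and $n-k-r$---while $\tau_3$ and $\tau_4$ are finite products ($s$ and $r$ terms) of local ratios of $f$.

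For each of $\tau_2,\tau_3,\tau_4,\tau_5$ the engine is Lemma \ref{lemma3}: part (a) handles the relative tail against $1-F(a_n)=(n-k)/n$, and part (b) handles local values of $f$ against $f(a_n)$. For $\tau_2$ and $\tau_5$ I would expand $F(\text{shifted argument})-F(\text{base argument})$ by the mean value theorem to pull out a single factor $c_n f(\text{intermediate point})$, substitute $c_n=1/(nf(a_n))$, and use Lemma \ref{lemma3} to identify the bracket as $1-yd_{k,n}/k$ with $d_{k,n}\to 1$ (respectively, $1-y$ times an analogous quantity tending to $1$, over $n-k$), so that the $(k-s-1)$th (resp.\ $(n-k-r)$th) power tends to $e^{-y}$ with $y=\sum_j y^*_j$ (resp.\ $y=\sum_j y_j$). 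For $\tau_3$ and $\tau_4$, each factor has the shape $\frac{k-j}{n}\cdot\frac{f(\cdot)}{f(a_n)}\cdot\frac{1}{F(\cdot)}$ (resp.\ $\frac{n-k-j+1}{n(1-F(a_n))}\cdot\frac{f(\cdot)}{f(a_n)}\cdot\frac{1-F(a_n)}{1-F(\cdot)}$), and Lemma \ref{lemma3} together with $k/n\to 1$ and $n(1-F(a_n))=n-k$ sends each factor, hence the finite product, to $1$. Multiplying the five limits gives the claimed product density.

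The main obstacle is the estimate behind Lemma \ref{lemma3}: one needs the relative-tail and local-density approximations to be uniform enough to survive exponentiation to a power of order $k$ or $n-k$. This rests on two imported ingredients: Falk's uniform convergence (\ref{Falk}), $f(a_n+b_n\theta y)/f(a_n)\to 1$ uniformly over $\theta\in(0,1)$ under each von Mises condition (\ref{sG1})--(\ref{sG3}); and Smirnov's normal-convergence criterion, which via (\ref{intconsts}) yields $\sqrt{n-k+1}\{1-(1-F(a_n+b_ny_0))/(1-F(a_n))\}\to y_0$ and hence part (a). A point deserving care is the placement of the shifted argument $a_n+b_ny_0+c_ny_1=a_n+b_n(y_0+y_1/\sqrt{n-k})$ inside an interval of the form $(a_n, a_n+2y_0 b_n)$ (with the reflected interval for $y_0<0$ and the $y_0=0$ case treated separately) so that (\ref{Falk}) applies with $y=2y_0$; once Lemma \ref{lemma3} is secured, everything else is routine bookkeeping.
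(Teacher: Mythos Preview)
Your proposal is correct and follows the paper's proof essentially line for line: the same five-factor decomposition $\tau_1\cdots\tau_5$ of (\ref{t1})--(\ref{t5}), the same appeal to Falk for $\tau_1$, the same mean-value-theorem expansions for $\tau_2$ and $\tau_5$, the same factor-by-factor reduction of $\tau_3$ and $\tau_4$ via Lemma~\ref{lemma3}, and even the same interval-placement argument for Lemma~\ref{lemma3}(b). The only addition is your explicit invocation of Scheff\'e's lemma to pass from pointwise density convergence to convergence in distribution, which the paper leaves implicit.
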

\subsection{Remarks - The Intermediate Case}

When $F \in \mathcal{D}(G_{1;\alpha})$,
$$\frac{a_n f(a_n)}{1-F(a_n)}=\frac{F^{-1}(k/n) c_n}{n-k}\rightarrow \alpha$$ and hence  ${\alpha (n-k)}/{F^{-1}(k/n)}$ can be chosen as $c_n$. When $F \in \mathcal{D}(G_{2;\alpha})$, the Von Mises condition implies $\alpha(n-k)/(x_1-F^{-1}({k}/{n}))$ can be used as $c_n$.
When $F \in \mathcal{D}(G_3)$, $[{nf(a_n)m(a_n)}/{(n-k)}]\rightarrow 1$ and we can use $m(a_n)/(n-k)$ as our $c_n$.

From Theorem \ref{theorem3} it follows that as in the central case, asymptotically, any two spacings, possibly of higher order, formed by nonoverlaping collections of order statistics around an intermediate order statistic $X_{k:n}$ are independent, and the collection is independent of $X_{k:n}$. 

Teugels (2001) has  introduced a family $\mathcal{C^*}$ of cdfs $F$ with the following property:
$F$ has an ultimately positive pdf $f$ and for all real $y$,
\begin{equation*}\label{teugels}
\frac{1}{h(x)}\left\{\frac{F\left(x+ y h(x)\frac{1-F(x)}{f(x)}\right)-F(x)}{1-F(x)}\right\} \to y,
\end{equation*}
whenever $h(x) \to 0$ as $x \to x_1$.  He states that the condition $F \in \mathcal{C^*}$  `slightly generalizes'  Falk's (1989) version of von Mises conditions (i.e., (\ref{sG1})--(\ref{sG3})).  Assuming $F \in \mathcal{C^*}$, Teugels shows that upon normalization described above (i) $X_{k:n}$ is asymptotically normal and (ii) $(X_{k:n} - X_{k-s:n})$ is asymptotically Gam$(s)$.  Their joint distribution and the asymptotic independence are not discussed there.  
\section{The Upper Extreme Case}
We now assume that $k \to \infty$ such that $n-k$ is fixed.
It is well-known that when $F \in D(G)$ for $G$ given in (\ref{extremeG}), 
\begin{equation}\label{multext} \left( ({X_{n:n} - a_n})/{b_n}, \cdots,  ({X_{k:n} - a_n})/{b_n}\right) \stackrel{d}{\to} (W_1, \cdots, W_{n-k+1})\end{equation}
where for any finite fixed $j$ the vector $(W_1, \cdots, W_j)$ has the same joint distribution as
 \begin{eqnarray}
& \left(Z_1^{-1/\alpha}, (Z_1+Z_2)^{-1/\alpha},\ldots, (Z_1+\cdots+Z_j)^{-1/\alpha}\right), ~\mbox{if}~ G = G_{1;\alpha};\label{joint_ext1}\\
& \left(-Z_1^{1/\alpha}, -(Z_1+Z_2)^{1/\alpha},\ldots, -(Z_1+\cdots+Z_j)^{1/\alpha}\right),  ~\mbox{if}~ G = G_{2;\alpha};\label{joint_ext2}
\end{eqnarray}
and if $G = G_{3}$,
\begin{equation}\label{joint_ext3a}
(W_1, \cdots, W_j)\stackrel{d}{=} \left(-\ln Z_1,-\ln(Z_1+Z_2),\ldots,-\ln(Z_1+\cdots+Z_j)\right)
\end{equation}
\begin{equation}\label{joint_ext3b}
\stackrel{d}{=}\left(\sum_{i=k}^{\infty} \frac{Z_i -1}{i} + \gamma - \sum_{i=1}^{k-1} \frac{1}{i}, k = 1, \ldots, j\right)
\end{equation}
where the $Z_i$'s are i.i.d. Exp($1$) rvs. The first three representations above are from Nagaraja (1982) who also shows that the joint limiting distribution $(W_1, \ldots, W_j)$ is identical to the joint distribution of the first $j$ lower record values from the cdf $G$.  The representation in (\ref{joint_ext3b}) is due to Hall (1978), and is more convenient when $G = G_3$.  
Thus, whenever $F \in D(G)$, the limiting form of the joint  distribution of the normalized spacings and the concerned extreme order statistics can be described as follows: \begin{eqnarray}\label{wrep}&&\hspace{-.3in}\left((X_{n:n}-X_{n-1:n})/b_n, \ldots, (X_{k+1:n}-X_{k:n})/b_n,  (X_{k:n} - a_n)/b_n, \right.\nonumber \\&&\hspace{-.3in}\left.(X_{k:n}-X_{k-1:n})/b_n,\ldots, (X_{k-s+1:n}-X_{k-s:n})/b_n)\right) \nonumber \\ &\stackrel{d}{\to}& (W_1 - W_2, \ldots, W_{n-k} -W_{n-k+1}, W_{n-k+1},\nonumber \\&& W_{n-k+1} -W_{n-k+2}, \ldots, W_{n-k+s} -W_{n-k+s+1})
\end{eqnarray} where the $W_j$'s have one of the forms given in (\ref{joint_ext1}) - (\ref{joint_ext3b}). We now specialize our results for each of the three domains.

\subsection{The Fr\'echet Domain} In this case $a_n$ can be 
chosen to be 0 and $b_n$ to be $F^{-1}(1-1/n)$ ($= x_{1-n^{-1}}$).  
 The representation in (\ref{wrep}) for the limiting joint distribution along with (\ref{joint_ext1})  suggests that  an extreme spacing is not asymptotically exponential, and the adjacent spacings are neither independent, nor identically distributed in the limit. The asymptotic independence of the spacings and the extreme order statistic also fails. Hence when $F\in D(G_{1; \alpha})$, the asymptotic distributional structure for the extreme spacings differs from that for the central and intermediate cases. 
 
 From (\ref{multext}) and (\ref{joint_ext1}) we conclude that when $F\in \mathcal{D}(G_{1;\alpha})$, the normalized higher order spacing, $$(X_{n:n} - X_{n-j:n})/b_n \stackrel{d}{\to} W_1 - W_{j+1}\stackrel{d}{=}Z_1^{-1/\alpha} - (Z_1+S_j)^{-1/\alpha},$$
 where the sum $S_j = Z_2 + \cdots + Z_{j+1}$ is a Gam($j$) rv that is independent of $Z_1$. This distributional representation complements the work of Pakes and Steutel (1997) who have given an expression for the cdf of the limiting rv as (p. 192)
 $$ P(W_1 \leq w) + P(W_1 -(S_j + W_1^{-\alpha})^{-1/\alpha} \leq w, W_1 > w), ~w > 0.$$
They comment that this expression for the cdf `does not seem susceptible to simplification for any choice of the parameter $\alpha$'; for the other two domains, they provide explicit distributional representation that is equivalent to ours (see below).
\subsection{The Weibull Domain}
Here $x_1 (=F^{-1}(1))$ is finite and can be chosen to be our $a_n$ and the scaling constant $b_n$ can be chosen to be $x_1 - x_{1-1/n}$. From (\ref{wrep}) and (\ref{joint_ext2}) we can conclude that the normalized adjacent spacings are asymptotically i.i.d.\ exponential iff $\alpha =1$ when $F \in \mathcal{D}(G_{2; \alpha})$.  Otherwise, they are all dependent.  When $\alpha = 1$ and $k < n$, the joint asymptotic distributional structure of $((X_{n:n}-X_{n-1:n})/b_n, \ldots, (X_{k+1:n}-X_{k:n})/b_n,  (X_{k:n} - a_n)/b_n, (X_{k:n}-X_{k-1:n})/b_n,\ldots,(X_{k-s+1:n}-X_{k-s:n})/b_n)$ is that of
\begin{equation}\label{freshrep}\left(Z_2, \ldots, Z_{n-k+1},-( Z_1 + \cdots + Z_{n-k+1}), Z_{n-k+2},\cdots, Z_{n-k+s+1}\right).\end{equation}
Thus when $\alpha = 1$, $X_{k:n}$ is  asymptotically independent of the spacings in its left neighborhood, but is symmetrically dependent on the ones on its right. This conclusion is formalized in the following result.
\begin{theorem}\label{ext_2}
When $F \in \mathcal{D}(G_{2; \alpha=1})$, for each fixed $n-k$ and $s$, the asymptotic joint distribution of $((X_{n:n}-X_{n-1:n})/b_n, \ldots, (X_{k+1:n}-X_{k:n})/b_n,  (X_{k:n} - a_n)/b_n, (X_{k:n}-X_{k-1:n})/b_n,\ldots, (X_{k-s+1:n}-X_{k-s:n})/b_n)$ has the representation given by (\ref{freshrep}), where the $Z_j$'s are i.i.d. Exp($1$) rvs.  The $b_n$ can be chosen as $x_1-F^{-1}(1-1/n)$.  When $F \in \mathcal{D}(G_{2; \alpha})$ the adjacent spacings are i.i.d. exponential iff $\alpha = 1$. \end{theorem}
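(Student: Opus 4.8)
The plan is to obtain both halves of the statement directly from the representations already recorded in (\ref{multext})--(\ref{wrep}) together with the Weibull formula (\ref{joint_ext2}), the only genuine computation being a short moment comparison for the ``only if'' direction. First I would note that for $F\in\mathcal{D}(G_{2;\alpha})$ one may take $a_n=x_1$ and $b_n=x_1-F^{-1}(1-1/n)$, so that (\ref{wrep}) applies verbatim with these constants and expresses the limit of the full normalized vector (the right-neighborhood spacings, then $(X_{k:n}-a_n)/b_n$, then the left-neighborhood spacings) in terms of the lower-record sequence $(W_1,W_2,\dots)$ of $G_{2;\alpha}$ furnished by (\ref{joint_ext2}).

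For the ``if'' part I would set $\alpha=1$ in (\ref{joint_ext2}), giving $(W_1,\dots,W_j)\stackrel{d}{=}(-Z_1,-(Z_1+Z_2),\dots,-(Z_1+\cdots+Z_j))$ with i.i.d.\ $Z_i\sim\mathrm{Exp}(1)$. The differences telescope: $W_i-W_{i+1}=Z_{i+1}$ for every $i$, while $W_{n-k+1}=-(Z_1+\cdots+Z_{n-k+1})$. Substituting these into the limiting vector of (\ref{wrep}) and matching components against the ordering there (the $X_{k:n}$ term between the two blocks, the left spacings indexed inward) reproduces exactly (\ref{freshrep}); since $Z_2,\dots,Z_{n-k+s+1}$ are i.i.d.\ $\mathrm{Exp}(1)$, this also shows the $n-k+s$ adjacent spacings are asymptotically i.i.d.\ $\mathrm{Exp}(1)$ when $\alpha=1$.

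For the ``only if'' part, suppose $\alpha\neq1$. By (\ref{wrep}) and (\ref{joint_ext2}) the $m$-th normalized adjacent spacing (right neighborhood for $m=1,\dots,n-k$, left for $m=n-k+1,\dots,n-k+s$) converges in distribution to $W_m-W_{m+1}\stackrel{d}{=}(Z_1+\cdots+Z_{m+1})^{1/\alpha}-(Z_1+\cdots+Z_m)^{1/\alpha}$. Using $Z_1+\cdots+Z_\ell\sim\mathrm{Gam}(\ell)$ and $E[\mathrm{Gam}(\ell)^{1/\alpha}]=\Gamma(\ell+1/\alpha)/\Gamma(\ell)$, a one-line simplification gives $E[W_m-W_{m+1}]=\Gamma(m+1/\alpha)/(\alpha\,\Gamma(m+1))$, so the means of consecutive limiting spacings are in the ratio $(m+1/\alpha)/(m+1)$, which equals $1$ only when $\alpha=1$. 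Since $s\geq1$ and $k<n$ force $n-k+s\geq2$, there are at least two limiting spacings and they have distinct means, so they cannot be i.i.d.\ exponential (of any common rate) unless $\alpha=1$; combined with the previous paragraph this is the asserted equivalence.

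I expect no serious obstacle, since everything rests on the record-value machinery quoted from Nagaraja (1982) in (\ref{multext})--(\ref{joint_ext3b}). The points that need care are purely bookkeeping: correctly aligning the telescoped $W$-differences with the component order in (\ref{wrep}) and (\ref{freshrep}), and phrasing the ``only if'' argument for the weakest reasonable reading of ``i.i.d.\ exponential'' (arbitrary common rate), which the mean-ratio computation handles uniformly. One could optionally add that, for general $\alpha$, the conditional-independence-given-partial-sums structure of (\ref{joint_ext2}) exhibits the dependence among adjacent spacings explicitly, but this is not needed for the theorem as stated.
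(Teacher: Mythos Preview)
Your proposal is correct and follows the paper's own route: both derive the representation by specializing (\ref{wrep}) and (\ref{joint_ext2}) at $\alpha=1$ with $a_n=x_1$, $b_n=x_1-F^{-1}(1-1/n)$, and telescoping the partial sums to recover (\ref{freshrep}). The only difference is in the ``only if'' direction: the paper simply asserts from (\ref{wrep}) and (\ref{joint_ext2}) that for $\alpha\neq1$ the limiting spacings are all dependent, whereas you supply an explicit mean computation $E[W_m-W_{m+1}]=\Gamma(m+1/\alpha)/(\alpha\,\Gamma(m+1))$ showing consecutive means stand in ratio $(m+1/\alpha)/(m+1)\neq1$, hence the spacings are not even identically distributed. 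Your argument is slightly more detailed and self-contained at that point, but the overall strategy is the same.
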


The standard uniform distribution is in $\mathcal{D}(G_{2;\alpha})$ with $\alpha = 1$ and hence has asymptotically i.i.d. extreme spacings.  We had reached this conclusion earlier in Lemma \ref{lemma1} (recall $x_1 - F^{-1}(1-1/n) = 1/n$).  But we have a more general result now that describes the symmetric dependence of the right neighborhood spacings on $X_{k:n}$ and is applicable to all $F \in \mathcal{D}(G_{2;1})$.  In fact with $n-k$ fixed, given $(X_{k:n}- x_1)/b_n =u~  (< 0)$, $(X_{n:n}-X_{n-1:n})/b_n, \ldots, (X_{k+1:n}-X_{k:n})/b_n$ behave like the spacings from a random sample of size $n-k$ from a uniform distribution over $(u,0)$. For the form of the joint distribution, see, e.g., David and Nagaraja (2003; Sec. 6.4).

 From (\ref{multext}) and (\ref{joint_ext2})  we conclude that when $F\in \mathcal{D}(G_{2;\alpha})$, the normalized higher order spacing $$(X_{n:n} - X_{n-j:n})/b_n \stackrel{d}{\to} W_1 - W_{j+1}\stackrel{d}{=} (Z_1+S_j)^{1/\alpha} -Z_1^{1/\alpha},$$
 where the sum $S_j = Z_2 + \cdots + Z_{j+1}$ is a Gam($j$) rv that is independent of $Z_1$. This is the conclusion of Theorem 7.2 in Pakes and Steutel (1997).

\subsection{The Gumbel Domain}  Using (\ref{wrep}) along with the representation for $W_j$ in  (\ref{joint_ext3b}), we conclude the following. 
\begin{theorem}\label{ext_3}
When $F \in \mathcal{D}(G_{3})$, for each fixed $n-k$ and $s$, the asymptotic joint distribution of $((X_{n:n}-X_{n-1:n})/b_n, \ldots, (X_{k+1:n}-X_{k:n})/b_n,  (X_{k:n} - a_n)/b_n, (X_{k:n}-X_{k-1:n})/b_n,\ldots, (X_{k-s+1:n}-X_{k-s:n})/b_n)$ has the following distributional representation:
\begin{equation}\label{ext_3b}
\left(Z_1, \frac{Z_2}{2}, \cdots, \frac{Z_{n-k}}{n-k},~ \sum_{i=n-k+1}^{\infty} \frac{Z_i -1}{i} + \gamma - \sum_{i=1}^{n-k} \frac{1}{i},~
\frac{Z_{n-k+1}}{n-k+1}, \cdots, \frac{Z_{n-k+s}}{n-k+s}\right)
\end{equation}
where the $Z_i$'s are i.i.d.\ Exp($1$) rvs.  The $b_n$ can be chosen as $m(F^{-1}(1-1/n))$, and if (\ref{sG3}) holds, as $\{n f(F^{-1}(1-1/n))\}^{-1}$.  
\end{theorem}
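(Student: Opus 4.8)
The proof is almost entirely a consequence of machinery already in place: I would obtain (\ref{ext_3b}) by combining the joint convergence (\ref{multext}) with Hall's representation (\ref{joint_ext3b}) and a one-line telescoping computation. Since $n-k$ and $s$ are fixed and $k\to\infty$, (\ref{multext}) gives the joint weak convergence of the finitely many normalized order statistics $((X_{n:n}-a_n)/b_n,\dots,(X_{k-s:n}-a_n)/b_n)$ to $(W_1,\dots,W_{n-k+s+1})$. Taking consecutive differences is a continuous map on $\mathbb{R}^{n-k+s+1}$, and the limit coordinates are a.s.\ finite (the series $\sum_{i\ge 1}(Z_i-1)/i$ converges a.s.), so the continuous mapping theorem upgrades this to the convergence of the spacings-plus-central-statistic vector displayed in (\ref{wrep}). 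It therefore remains only to evaluate the differences $W_\ell-W_{\ell+1}$ under (\ref{joint_ext3b}).

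Here the Gumbel-domain form (\ref{joint_ext3b}) is decisive, because it is stated simultaneously in the index: there is a single i.i.d.\ Exp$(1)$ sequence $(Z_i)$ with $W_\ell\stackrel{d}{=}\sum_{i\ge \ell}(Z_i-1)/i+\gamma-\sum_{i=1}^{\ell-1}1/i$ for all $\ell$. Subtracting the $(\ell+1)$-st term from the $\ell$-th telescopes the tail series and the harmonic partial sum, giving
\[
W_\ell-W_{\ell+1}=\frac{Z_\ell-1}{\ell}+\frac{1}{\ell}=\frac{Z_\ell}{\ell},\qquad \ell\ge 1 .
\]
Substituting $\ell=1,\dots,n-k$ for the right-neighbourhood differences $W_1-W_2,\dots,W_{n-k}-W_{n-k+1}$, keeping the central coordinate $W_{n-k+1}=\sum_{i\ge n-k+1}(Z_i-1)/i+\gamma-\sum_{i=1}^{n-k}1/i$, and substituting $\ell=n-k+1,\dots,n-k+s$ for the left-neighbourhood differences turns (\ref{wrep}) into precisely (\ref{ext_3b}). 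One should emphasize that this is a genuine joint representation and not an independence statement: the blocks $Z_1/1,\dots,Z_{n-k}/(n-k)$ are independent of everything to their right, but the central coordinate and the left-neighbourhood spacings $Z_{n-k+1}/(n-k+1),\dots,Z_{n-k+s}/(n-k+s)$ all involve the common tail block $(Z_i)_{i\ge n-k+1}$.

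For the norming constant, de Haan's necessary and sufficient condition for $F\in\mathcal D(G_3)$ admits the choice $a_n=F^{-1}(1-1/n)$ and $b_n=m(a_n)$, where $m(x)=E(X-x\mid X>x)=\int_x^{x_1}\frac{1-F(t)}{1-F(x)}\,dt$ is finite near $x_1$; this is the normalization under which (\ref{multext}), and hence (\ref{wrep}), holds. If in addition the von Mises condition (\ref{sG3}) is assumed, then $f(x)m(x)/(1-F(x))\to 1$ as $x\to x_1-$, so at $x=a_n$, where $1-F(a_n)=1/n$, we get $m(a_n)\sim\{nf(a_n)\}^{-1}$; since asymptotically equivalent norming constants produce the same weak limit, $b_n=\{nf(F^{-1}(1-1/n))\}^{-1}$ is then an equally valid choice. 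The only point demanding any care is the remark above — invoking (\ref{joint_ext3b}) as a joint identity in distribution rather than coordinatewise — and once that is granted, this is the most direct of the three extreme-domain results.
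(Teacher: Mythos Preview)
Your proof is correct and follows exactly the same route as the paper, which simply states that the result follows from combining (\ref{wrep}) with Hall's representation (\ref{joint_ext3b}); you have merely spelled out the telescoping computation $W_\ell-W_{\ell+1}=Z_\ell/\ell$ and the norming-constant equivalence that the paper leaves implicit.
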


The representation in (\ref{ext_3b}) shows that while $X_{k:n}$ is independent of spacings in its right neighborhood, it is correlated with the spacings in its left neighborhood, and this correlation decreases at the rate of $1/(n-k+j), j \geq 1$ as one moves away from it.   This is in contrast with the situation when $F \in \mathcal{D}(G_{2,1})$.  

Weissman (1978) has considered the limit distribution of  $$((X_{n:n}-X_{n-1:n})/b_n, \ldots, (X_{k+1:n}-X_{k:n})/b_n,  (X_{k:n} - a_n)/b_n),$$  and has given the representation given by the first $(n-k)$ components of the vector in (\ref{ext_3b}). He has also noted the independence of these spacings and $W_{n-k+1}$ (in his Theorem 2).

 The above theorem shows that by using varying scaling sequences for the spacings, we can obtain i.i.d. standard exponential distributions in the limit. In particular we have the following:
$$((n-j+1)(X_{n-j+1:n}-X_{n-j:n})/b_n, j = 1, \ldots, n-k+s) \stackrel{d}{\to} (Z_1, \cdots, Z_{n-k+s}).$$ 

 From (\ref{multext}) and (\ref{joint_ext3b})  or from the representation (\ref{ext_3b}) we can conclude that when $F\in \mathcal{D}(G_{3})$, the normalized higher order spacing, $$(X_{n:n} - X_{n-j:n})/b_n \stackrel{d}{\to} W_1 - W_{j+1}\stackrel{d}{=} Z_1 + \frac{Z_2}{2} + \cdots + 
 \frac{Z_j}{j} \stackrel{d}{=} Z_{j:j}.$$The last equality above follows from the representation for exponential order statistics given in (\ref{exp_os}). This is the conclusion reached in Theorem 7.1 of Pakes and Steutel (1997).  

\subsection{Remarks - The Extreme Case}
\subsubsection{Cox Processes}
Whenever $F \in \mathcal{D}(G)$, we can say that the arrival process of order statistics in the left neighborhood of an upper extreme order statistic is asymptotically a Poisson process and is independent of its value only when $F \in G_2$ with $\alpha = 1$.  The arrival processes on both sides of the extreme order statistics are pure birth processes when $F \in G_3$; only the arrival process on the right side is independent of the order statistic.  

Harshova and H\"usler (2000) have shown that the arrival processes on the left neighborhood of the sample maximum are special  {\it Cox processes}, when $G$ is of Weibull ($G_{2; \alpha}$) or Gumbel ($G_3$) type cdf. Cox processes are  mixed Poisson processes where the time-dependent intensity $\lambda(t)$ is itself a stochastic process (Daley and Vere-Jones (2003; Sec.\ 6.2)). Harshova and H\"usler consider the counting process in the left neighborhood of $X_{n:n}$,  $N_n(\cdot)$, defined by $N_n([a,b)) = \#\{X_i \in [X_{n:n} - a\cdot d_n, X_{n:n}- b\cdot d_n)\}, 0 < a < b$.  From their Theorem 1.2 it follows that $N_n(\cdot) \stackrel{d}{\to} N(\cdot)$ where $\{N(t), t >0\}$ is a Cox process with stochastic intensity function $\lambda(t) = e^{t-W}$ in the Gumbel case where $W$ has cdf $G_3$; and $\lambda(t) = \alpha(t-W)^{\alpha -1}$ in the Weibull case where $W$ has cdf $G_{2; \alpha}$. The cdfs $G_{2;\alpha}$ and $G_3$ are given in (\ref{extremeG}).  Representations given in  (\ref{joint_ext2}), and (\ref{joint_ext3a}) or (\ref{joint_ext3b}) provide another characterization of the resulting Cox processes in terms of the distribution of inter-arrival times of order statistics below the maximum. 

\subsubsection{Higher Order Extreme Spacings} The representation for the special higher order extreme spacing  involving the sample maximum ($X_{n:n}- X_{n-j:n}$, discussed above) can be expanded to other extremes.  From (\ref{multext}) - (\ref{joint_ext2}), and (\ref{joint_ext3b})  we conclude that  as $n \to \infty$, for fixed $1 \leq i < j$,  $(X_{n-i+1:n} - X_{n-j+1:n})/b_n $ converges in distribution to
\begin{eqnarray}\label{limitspacing}  (\sum_{l=1}^i Z_l)^{-1/\alpha} - (\sum_{l=1}^j Z_l)^{-1/\alpha} &\stackrel{d}{=}& W_i - (W_i^{-\alpha} + \mbox{Gam($(j-i)$))}^{-1/\alpha}, G \in \mathcal{D}(G_{1;\alpha}); \nonumber\\   (\sum_{l=1}^i Z_l)^{1/\alpha} - (\sum_{l=1}^j Z_l)^{1/\alpha}&\stackrel{d}{=}& (W_i^{\alpha} + \mbox{Gam($(j-i)$))}^{1/\alpha} -W_i, G \in \mathcal{D}(G_{2;\alpha}), \nonumber \\ &\stackrel{d}{=} & \mbox{Gam($(j-i)$)}, G \in \mathcal{D}(G_{2;1});\nonumber\\ \frac{Z_{i}}{i} +  \cdots +  \frac{Z_{j-1}}{j-1}&\stackrel{d}{=}& Z_{j-i:j-1}, G \in \mathcal{D}(G_3).\end{eqnarray}
Here the last distributional equality follows from (\ref{exp_os}).  The above representations are extremely helpful in providing the asymptotic distribution theory for the number of order statistics around a specified extreme order statistic.  This will be illustrated in the next section where all cases (central, extreme, and intermediate) will be considered.  
\section{Counts of Observations Around an Order Statistic}
Consider the following count statistics that track the number of observations in the right and left neighborhoods of $X_{k:n}$:
 \begin{eqnarray*}\label{Kn}
 K_{-}(n,k, d) &=&\#\{j: X_{j} \in (X_{k:n} - d, X_{k:n}) \},\nonumber\\
 K_+(n,k, d) &=&\#\{j: X_{j} \in (X_{k:n}, X_{k:n}+d)\}.
 \end{eqnarray*}
 Clearly,
 \begin{eqnarray}\label{link2}
P(K_{-}(n,k, d) < i) &=& P(X_{k:n} - X_{k-i:n} > d), \nonumber \\P(K_{+}(n,k, d) < j) &=& P(X_{k+j:n} - X_{k:n} > d),
\end{eqnarray}
and thus the asymptotic distribution theory for spacings developed here can be directly applied to determine the limit distributions of the count statistics for appropriately chosen $d$ that is dependent on $n$.  Pakes and Steutel (1997) have used the link in (\ref{link2}) in the reverse direction in the extreme case where they derive the limit distribution of 
$K_{-}(n,n, d_n)$ first and use it to determine the limit distribution of the spacing $X_{n:n} - X_{n-k:n}$. 

As noted in the introduction, the literature on the investigation into the limit distribution of $K_{-}$ and $K_{+}$ is substantial. Poisson limits are generally obtained when $d = d_n$  is nonrandom but is dependent on the behavior of $F$ around the concerned statistic.  We now discuss implications of our results on spacings on the asymptotic distribution of counts and compare our results with only the most relevant results in the literature.

\subsection{The Central and Intermediate Cases - The Poisson Counts}
We have seen in Theorems \ref{theorem1} and \ref{theorem3} that  the $(X_{k+i:n} - X_{k+i-1:n})/c_n$ are asymptotically i.i.d.\ standard exponential for any fixed (positive or negative) integer $i$, where $c_n = 1/nf(x_{p_n})$ with $p_n \equiv p = \lim(k/n) \in (0,1)$ in the central case, and in the intermediate case,  $p_n = k/n \to 0$ or $1$ such that, respectively $k$ or $n-k \to \infty$. In other words, $K_{-}(n,k, \lambda_1 c_n)$ and $K_{+}(n,k, \lambda_2 c_n)$ are asymptotically independent, and Poi($\lambda_1$) and  Poi($\lambda_2$) rvs, respectively.  This conclusion matches with that of Pakes (2009) who has established the asymptotic Poisson property under a variety of technical conditions for the central [Theorem 16]  and intermediate cases [Theorems 9, 10 (b), 12 (b)]. 

Dembi\'nska et al. (2007) have established the asymptotic Poisson nature of both $K_{-}$ and $K_{+}$ for $d_n$ that depends on the local property of the quantile function $F^{-1}$ in the intermediate case and an associated technical condition on $F$ in the central case.  Their condition for Poisson convergence of $K_{+}(n,k, d_n)$ for a central order statistic [(17) in their Theorem 5.2] is 
\begin{equation}\label{Demb2}
{\lim}_{(x,y) \to (x_p,0^+)}\frac{F(x+y)-F(x)}{F(x_p+y) -F(x_p)} = 1. 
\end{equation}
This is comparable to our condition (\ref{demb}), but there are differences in these conditions and their implications.  While (\ref{demb}) specifies the behavior of the quantile function $F^{-1}$ around $p$, (\ref{Demb2})  puts a similar condition on the property of the cdf $F$ around $x_p$. Dembi\'nska et al.'s neighborhood is determined by $d_n = F^{-1}(p + \lambda/n) -x_p$, a quantity dependent on the behavior of $F^{-1}$ at $(p + \lambda/n)$.  In contrast, our $d_n = \lambda/n(f(x_p))$ depends  on the behavior of $F^{-1}$ only at $x_p$.  When $f$ is continuous around $x_p$ and $f(x_p)$ is positive and finite,  from L'Hospital's rule it follows that (\ref{Demb2}) is readily satisfied, and 
$ F^{-1}(p + \lambda/n) -x_p \sim \lambda/n(f(x_p)).$

Under (\ref{Demb2}) and a similar condition on the left neighborhood, Dembi\'nska and Balakrishnan (2010) have established the asymptotic independence of $K_{-}$ and $K_{+}$. This independence readily follows from our Theorems \ref{theorem2} and \ref{theorem3}.
The technical conditions used in Dembi\'nska et al. (2007) in the intermediate case (in their Theorem 6.1, for example) appear difficult to verify whereas the familiar von Mises conditions needed here are known to hold for many common distributions.  In addition, our results show that counts in disjoint intervals are Poisson and independent, and also that these are independent of the location of $X_{k:n}$.  These  finer conclusions on the limiting structure of the neighborhood cannot be reached using any of the currently available results in the literature on count statistics for the central and intermediate cases.
\subsection{The Upper Extremes - NonPoisson and Poisson Counts}
Asymptotic distributions of $K_{-}(n, k, d)$ and $K_{+}(n, k, d)$ have been investigated by many authors when $k$ or $n-k$ are held fixed starting from the work of Pakes and Steutel (1997) who looked at $K_{-}(n, n, d).$ Assuming $k$ is held fixed, Pakes and Li (1998) showed that $K_{-}(n, n- k, d)$ is asymptotically negative binomial, and Balakrishnan and Stepanov (2005) showed that  $K_{+}(n, n- k, d)$ is asymptotically binomial.  The success probability in these distributions is given by
$$\beta(d) = \lim_{x \to x_1}\frac{\overline{F}(x+d)}{\overline{F}({x})} \in (0,1),$$
where $x_1$ is assumed to be infinite.  

Pakes (2009) has considered the limit distribution of $K_{+}(n, n- k, cb_n)$ with $k$ fixed assuming that $F$ is in the domain of attraction of either Fr\'echet or Gumbel distribution and the $b_n$'s form the associated scaling sequence. When $F \in \mathcal{D}(G_{1; \alpha})$ he shows that the limit distribution of $K_{+}(n, n- k, b_n)$ is mixed binomial with parameters $k$ and random success probability that is a function of a Gam($k+1$) rv (his Theorem 5, part (a)). When $F \in \mathcal{D}(G_{3})$, $K_{+}(n, n- k, \lambda b_n)$ is shown to be asymptotically a Binomial rv with parameters $k$ and success probability $1- e^{-\lambda}$ (his Theorem 4).  

We now examine the consequences of the representations in (\ref{limitspacing}) and the relations in (\ref{link2}). When $k$ is fixed and $F \in \mathcal{D}(G_3)$, for any $j, 1 \leq j  \leq k$,
\begin{eqnarray*}
P(K_{+}(n, n-k, \lambda b_n) <j) &=& P(X_{n-k+j:n} - X_{n-k:n} >\lambda b_{n}) \\&\to& P( Z_{j:k} > \lambda) = \sum_{i=0}^{j-1}\binom{k}{i}(1- e^{-\lambda})^i (e^{-\lambda})^{k-i},\end{eqnarray*}
as $n\to \infty$. Since the maximum value attainable is $k$, the limit distribution is binomial, a result noted above. Further, 
\begin{eqnarray*}
P(K_{-}(n, n-k, \lambda b_n) <j) &=& P(X_{n-k:n} - X_{n-k-j:n} >\lambda b_{n}) \\&&\hspace{-.5in}\to P( Z_{j:k+j} > \lambda) = \sum_{i=0}^{j-1}\binom{k+j}{i}(1- e^{-\lambda})^i (e^{-\lambda})^{k+j-i},\end{eqnarray*}
resulting in a negative binomial distribution, a result shown by  Pakes and Li (1998).  They also derive the limit distribution of $K_{-}(n, n-k, \lambda b_n)$ in other cases; the representations in (\ref{limitspacing}) along with (\ref{link2}) yield us the same results.  Of these, a commonly known distribution is obtained only when $F \in \mathcal{D}(G_{2; 1})$ in which case $K_{+}(n, n-k, \lambda b_n)$ has a censored Poi($\lambda$) distribution that is censored on the right at $k$; this conclusion was reached in Theorem 4.1 of Dembi\'nska et al. (2007) under a set of technical conditions similar to the one given in (\ref{Demb2}). Further, $K_{-}(n, n-k, \lambda b_n)$ will have a Poi($\lambda$) distribution and these two statistics are asymptotically independent.

Whenever $F\in \mathcal{D}(G_{1; \alpha}~ \mbox{or}~ G_{2; \alpha \neq 1})$, we can obtain the asymptotic distributions of $K_{-}$ and $K_{+}$ using (\ref{limitspacing}) and (\ref{link2}) directly.  For example, when $F \in \mathcal{D}(G_{1; \alpha})$ we can use the corresponding representation in  (\ref{limitspacing}) to obtain the cdf of $K_{+}(n, n- k, b_n)$ in terms of Gamma rvs (in contrast with the mixed binomial representation of Pakes (2009) mentioned earlier).  While closed form expression for the cdf may not be available, the needed probabilities can be evaluated using tractable univariate integrals that involve gamma type integrands that can be easily evaluated numerically. The link between Gamma and Poisson cdfs comes in handy in this simplification. 
\section{Discussion}
We now provide further illustrations of applications of our results to distribution theory and inference.
\subsection{Examples}
Our examples thus far were the uniform and exponential populations, but our results are widely applicable since the conditions imposed here are satisfied by several common distributions.  In the central case, we need positivity and continuity of the population pdf at $x_p$ to achieve independent Poisson arrival process in both right and left neighborhoods. Von Mises conditions are satisfied by the common distributions that are in the domain of attraction of an extreme value cdf $G$ (given in (\ref{extremeG})) and thus the intermediate case also leads to independent Poisson arrival process for these distributions.  The extreme case does not require the von Mises conditions, and provides interesting examples of situations where we do not get Poisson processes.  For example, for $F \in \mathcal{D}(G_{1; \alpha})$, a property satisfied by Pareto and loggamma distributions, the arrival process is no longer Poisson.  Tables 3.4.2-3.4.4 of Embrechts et al.\ (1997) contain a good list of distributions in the domain of attraction of each of the three extreme value distributions along with the necessary norming (scaling) constants needed for the application of our results in the extreme case. 

Our intermediate and extreme case discussions focussed on the upper end of the sample.  Parallel results hold for the lower end of the sample and  upper-end and lower-end spacings can exhibit different types of clustering processes. For example, in the exponential parent case, upper extremes are in the Gumbel domain, and the lower extremes are in the Weibull domain with $\alpha = 1$ (i.e., Exp ($1$)).  Thus, for the lower extremes, we have a homogeneous Poisson arrival process in the right neighborhood, whereas for the upper extremes, we have  a pure birth process in the left neighborhood of the concerned order statistic.
\subsection{Inferential Implications}
Theorem 2 (a) can be used in the central case to provide      (asymptotically) distribution-free estimates of $x_p$ and $f(x_p)$ as noted by Siddiqui (1960) when he studied the joint distribution of $X_{k:n}, X_{k+r:n}-X_{k:n}, X_{k:n}-X_{k-s:n}$ [See Sec.\ 3.3 Remarks].  It follows from Theorem 2 that $n f(x_p)(X_{k+r:n}-X_{k-s:n})$ is asymptotically Gam$(r+s)$ and this fact can be used to provide estimates of $f(x_p)$ and confidence intervals for the population pdf at the $p$th quantile. The asymptotic independence of  $n f(x_p)(X_{k+r:n}-X_{k-s:n})$ and $ \sqrt{n} f(x_p)(X_{k:n} - x_p)$ and their known familiar distributions can be used to find the distribution of the pivotal quantity
$$\frac{(X_{k:n} - x_p)}{\sqrt{n}(X_{k+r:n}-X_{k-s:n})}.$$
From Theorem 2 it follows that this rv behaves asymptotically as the ratio of a standard normal and an independent gamma rv (or a scaled chi-square rv)  and this distribution is free of $f(x_p)$.  It easily leads to an asymptotically distribution-free confidence interval for $x_p$. 

A similar application of Theorem 3 would provide asymptotically distribution-free inference for the intermediate population quantile $F^{-1}(k/n)$ and pdf $f(F^{-1}(k/n))$ when one of the Von-Mises conditions is assumed to hold.

In the extreme case, we have seen that the limit distributions of the top $k$ order statistics are dependent on the domain of attraction. Weissman (1978) has discussed in detail inference on tail parameters (extreme quantiles and the {\it tail index} $1/\alpha$) based on these limit results. 
\subsection{Concluding Remarks}
 It is interesting to note that norming/scaling constants for $X_{k:n}$ and the adjacent spacings are of the same order only for the extreme case ($b_n$); the limiting distributions are similar as well (functions of Exp($1$) rvs).  For the central and intermediate cases, the spacings and $X_{k:n}$ are scaled differently, and their limit distributions are different; the spacings are related to exponential whereas $X_{k:n}$ relates to the normal.   For the extreme and intermediate cases our sufficient conditions that ensure the nondegenerate limit distributions for $X_{k:n}$  and for the adjacent spacings are the same.  In the central case, asymptotic normality for $X_{k:n}$ requires $k = np + o(\sqrt{n})$ (actually slightly less restriction on $k$ would work),  the asymptotic independence property of spacings holds whenever $k = np + o({n}).$
 
We have focussed here on neighborhoods of a single selected order statistic; this work can easily be extended to multiple neighborhoods.  In the case of two or more central order statistics and their neighborhoods, we obtain a multivariate normal limit distribution for the selected order statistics and independent Poisson processes around them.  Such a set up is considered in Theorem 3.1 of Dembi\'nska and Balakrishnan (2010) where the independence of Poisson counts in right and left neighborhoods of multiple central order statistics is derived.    In other cases (for example, one upper extreme, and another lower, considered in Theorem 2.1 of  Dembi\'nska and Balakrishnan (2010)), the resulting  counting processes will turn out to be independent.
\section*{Acknowledgements}
The authors thank two anonymous reviewers for a close reading of the manuscript and many helpful suggestions.   

\end{document}